\numberwithin{equation}{section}
\newcommand{\ot}{\otimes}
\newcommand{\id}{\text{id}}
\newcommand{\Z}{\mathbb{Z}}
\newcommand{\N}{\mathbb{N}}
\newcommand{\C}{\mathbb{C}}
\newcommand{\G}{\mathbb{G}}
\newcommand{\aut}{\G^{aut}(B,\psi)}
\newcommand{\caut}{C(\G^{aut}(B,\psi))}
\newcommand{\um}{\frac{1}{2}}
\newcommand{\wpr}{H_{(B,\psi)}^+(\widehat{\Gamma})}
\newcommand{\cwpr}{\widehat{\Gamma}\wr_{*} \G^{aut}(B,\psi)}
\newcommand{\ccwpr}{C(\widehat{\Gamma}\wr_{*} \G^{aut}(B,\psi))}
\newcommand{\Tr}{\text{Tr}}
\newcommand{\tr}{\text{tr}}
\newcommand{\Hom}{\text{Hom}}
\newcommand{\hilb}{\text{Hilb}_f}
\theoremstyle{plain}
\newtheorem{theorem}{Theorem}[section]
\newtheorem{prop}[theorem]{Proposition}
\theoremstyle{definition}
\newtheorem{deff}[theorem]{Definition}
\newtheorem{example}[theorem]{Example}
\newtheorem{notaz}[theorem]{Notation}
\theoremstyle{remark}
\newtheorem{remark}[theorem]{Remark}
\begin{document}

\title[FREE WREATH PRODUCTS OF A DISCRETE GROUP BY $\aut$]{\textbf{The free wreath product of a discrete group by a quantum automorphism group}}
\author{Lorenzo Pittau}

\begin{abstract}
Let $\G$ be the quantum automorphism group of a finite dimensional C*-algebra $(B,\psi)$ and $\Gamma$ a discrete group. We want to compute the fusion rules of $\widehat{\Gamma}\wr_* \G$. First of all, we will revise the representation theory of $\G$ and, in particular, we will describe the spaces of intertwiners by using noncrossing partitions. It will allow us to find the fusion rules of the free wreath product in the general case of a state $\psi$. We will also prove the simplicity of the reduced C*-algebra, when $\psi$ is a trace, as well as the Haagerup property of $L^\infty(\widehat{\Gamma}\wr_* \G)$, when $\Gamma$ is moreover finite.
\end{abstract}

\maketitle

\section*{Introduction}

The core idea of the quantum automorphism group is to provide a quantum version of the standard notion of an automorphism group. As Wang proves in \cite{wan98}, given a finite dimensional C*-algebra $B$ endowed with a state $\psi$, the category of compact quantum groups acting on $B$ and $\psi$-invariant admits a universal object which is called a quantum automorphism group and is denoted $\aut$. 
In the particular case of $B=\C^n$ endowed with the uniform probability measure $\psi$, it is easy to prove that $\G^{aut}(\C^n,\psi)$ is the quantum symmetric group $S_n^+$. 
The representation theory of $\aut$ has been taken into account for the first time by Banica in \cite{ban99, ban02}: in such a paper, he showed that, if $\psi$ is a $\delta$-form and $\dim(B)\geq 4$, the spaces of intertwiners can be described by using Temperley-Lieb diagrams; moreover, the irreducible representations and the fusion rules are the same as of $SO(3)$.
Later on, in \cite{bs09} Banica and Speicher proved that, in the case of $S_n^+$ it was possible to describe the intertwiners in a simpler way by using noncrossing partitions. Bichon in \cite{bic04} introduced the definition of the free wreath product of a compact quantum group by a quantum permutation group and proved that it was possible to endow this product with a compact quantum group structure. 
Later on, in \cite{bv09} Banica and Vergnioux showed that the quantum reflection group $H^{s+}_n$ is isomorphic to $\widehat{\Z}_s\wr_* S_n^+$ and calculated its fusion rules in the case $n\geq 4$. In \cite{fr13}, Lemeux generalized this result to the free wreath product $\widehat{\Gamma}\wr_* S_n^+$, where $\Gamma$ is a discrete group and $n\geq 4$. 
In the last few years, the fusion rules of compact quantum groups have been used as a starting point for the proof of some of their properties such as simplicity, fullness, factoriality and Haagerup property.

In the present work, we aim to provide a further generalization of such results, by considering the free wreath product of the dual of a discrete group by a quantum automorphism group. The first step is revising the representation theory of $\aut$, $\dim(B)\geq 4$ by showing that, for a general $\delta$-form $\psi$, it is possible to describe the intertwiners by means of noncrossing partitions. By relying on such a graphical interpretation, we find the fusion rules and the irreducible representations of $\wpr :=\widehat{\Gamma}\wr_* \aut$, $\dim(B)\geq 4$. In this case the intertwiners are interpreted by using noncrossing partitions decorated with the elements of the discrete group $\Gamma$. In the last section we prove that, when $\psi$ is a $\delta$-trace, $C_r(H_{(B,\psi)}^+)$ is simple with unique trace if $\dim(B)\geq 8$, while $L^\infty(\widehat{\Gamma}\wr_* \G)$ has the Haagerup property if $\Gamma$ is finite.
All these results are also presented in a more general form, by dropping the $\delta$ condition for $\psi$ and by proving that in this case the free wreath product can be decomposed as a free product of quantum groups in which the condition still holds.

\section*{Acknowledgements}
The author is very grateful to Pierre Fima for the numerous discussions they had about the subjects of this paper and for all his suggestions. The author would like to thank Teodor Banica and Fran{\c c}ois Lemeux for useful discussions. The author also thanks Roland Vergnioux for all his commentaries.

\section{The quantum automorphism group $\aut$}
In this section we start by recalling some known results concerning the quantum automorphism group and the category of noncrossing partitions. Then, we will give a new description of the intertwining spaces of a quantum automorphism group, by using noncrossing partitions instead of Temperley-Lieb diagrams.
\begin{notaz}
The symbol $\ot$ will be used to denote the tensor product of Hilbert spaces, the minimal tensor product of C*-algebras or the tensor product of von Neumann algebras, depending on the context. We denote by $\mathcal{L}(H)$ the C*-algebra of bounded linear operators on an Hilbert space $H$.
\end{notaz}
Let $(e_i)_i$ be an orthonormal basis of the Hilbert space $H$ and let $(e_{ij})_{ij}$ be the associated matrix units. An element $u\in\mathcal{L}(H)\ot A$, where $A$ is a unital C*-algebra, can be written as $u=\sum e_{ij}\ot u_{ij}$. The $u_{ij}\in A$ are called the coefficients of $u$.

A Woronowicz compact quantum group (see \cite{wor87,wor98}) is a pair $(C(\G),\Delta)$ where $C(\G)$ is a unital C*-algebra and $\Delta:C(\G)\longrightarrow C(\G)\otimes C(\G)$ a $\ast$-homomorphism together with a family of unitaries $(u^{\alpha})_{\alpha\in I}$, $u^{\alpha}\in \mathcal{L}(H_\alpha)\ot C(\G)$, where $H_\alpha$ is a finite dimensional Hilbert space, such that:
\begin{itemize}
\item the $*$-subalgebra generated by the coefficients of all the $u^\alpha$, $\alpha\in I$, is dense in $C(\G)$,
\item for all $\alpha\in I$ we have $(\id\ot\Delta)(u^\alpha)=u_{12}^\alpha u_{13}^\alpha$,
\item for all $\alpha\in I$ the transposed matrix $(u^\alpha)^t$ is invertible.
\end{itemize}

A finite dimensional representation of the compact quantum group $(C(\G),\Delta)$ is an element $v\in \mathcal{L}(H)\ot C(\G)$, where $H$ is a finite dimensional Hilbert space, such that $(\id\ot\Delta)(v)=v_{12} v_{13}$.
Given two representations $v\in \mathcal{L}(H)\ot C(\G)$ and $w\in \mathcal{L}(K)\ot C(\G)$, the space of the intertwiners is $\Hom(v,w)=\{T\in \mathcal{L}(H,K)|$ $(T\otimes 1)v=w(T\otimes 1)\}$.

We recall that every finite dimensional C*-algebra $B$ is isomorphic to a multimatrix C*-algebra. In what follows, we will consider the decomposition $B=\bigoplus_{\alpha=1}^c M_{n_\alpha}(\C)$. Let $\mathscr{B}=\{(e_{ij}^\alpha)_{i,j=1,\dots,n_\alpha},\alpha=1,\dots,c\}$ be the canonical basis of matrix units. Suppose $B$ endowed with the usual multiplication $m:B\otimes B\longrightarrow B$, $m(e_{ij}^\alpha \otimes e_{kl}^\beta)=\delta_{jk}\delta_{\alpha\beta} e_{il}^\alpha$ and unity $\eta:\C\longrightarrow B$, $\eta(1)=\sum_{\alpha=1}^c\sum_{i=1}^{n_\alpha}e_{ii}^\alpha$.
Every finite dimensional C*-algebra $B$ can be endowed with a Hilbert space structure by considering the scalar product $\langle x,y\rangle =\psi(y^*x)$ which is induced by a faithful state $\psi$.
\begin{deff}
Let $B$ be a finite dimensional C*-algebra and $\delta>0$. A faithful state $\psi:B\longrightarrow \C$ is a $\delta$-form, if the multiplication map of $B$ and its adjoint with respect to the inner product induced by $\psi$ satisfy $mm^*=\delta\cdot \id_B$. If such a $\psi$ is also a trace, then it is called a tracial $\delta$-form or a $\delta$-trace.
\end{deff}

\begin{remark}\label{df}
For every faithful state $\psi:M_n(\C)\longrightarrow\C$, there exists $Q\in M_n(\C)$, $Q>0$, $\Tr(Q)=1$ such that $\psi=\Tr(Q\cdot)$. In particular, we notice that every such $\psi$ is a $\delta$-form, with $\delta=\Tr(Q^{-1})$.
Broadly speaking, if $B=\bigoplus_{\alpha=1}^c M_{n_\alpha}(\C)$, then every faithful state $\psi$ over $B$ is of the form $\psi=\bigoplus_{\alpha=1}^c \Tr(Q_\alpha\cdot)$ for a suitable family $Q_\alpha\in M_{n_\alpha}(\C)$, $Q_\alpha>0$, $\sum_\alpha \Tr(Q_\alpha)=1$. In particular, if $\Tr(Q_\alpha^{-1})=\delta$ for all $\alpha$, then $\psi$ is a $\delta$-form.
\end{remark}

It is well known that every positive complex matrix is diagonalizable. In particular, the matrices $Q_\alpha$ are always similar to diagonal matrices with positive real eigenvalues. In what follows, we will always assume that the matrix units $e_{ij}^\alpha$ are associated with a basis of $\C^{n_\alpha}$ which diagonalizes $Q_\alpha$. 
We will denote $Q_{i,\alpha}$ the eigenvalue in position $(i,i)$ of $Q_\alpha$ written with respect to this fixed diagonalizing basis. We observe that $\psi(e_{ij}^\alpha)=\Tr(Q_\alpha e_{ij}^\alpha)=\delta_{ij}Q_{i,\alpha}$. Then, the basis $\mathscr{B}$ is always orthogonal with respect to the scalar product induced by $\psi$. By normalizing $\mathscr{B}$ we obtain the orthonormal basis $$\mathscr{B}'=\{b_{ij}^\alpha | b_{ij}^\alpha=\psi(e_{jj}^\alpha)^{-\um} e_{ij}^\alpha,\; i,j=1,\dots,n_\alpha,\;\alpha=1,\dots,c\}.$$

We are now ready to give one of the main definitions of the paper. The quantum automorphism group of a finite dimensional C*-algebra $(B,\psi)$ is the universal object in the category of the compact quantum groups acting on $B$ and leaving the state $\psi$ invariant (see \cite{ban99, wan98}). More precisely it can be defined as follows.

\begin{deff}\label{autb}
Let $B$ be a finite dimensional C*-algebra with multiplication $m:B\otimes B\longrightarrow B$ and unity $\eta:\C\longrightarrow B$. Let $\psi$ be a state on $B$. Let $\caut$ be the universal unital C*-algebra generated by the coefficients of $u\in\mathcal{L}(B)\ot \caut$ with the following relations
\begin{itemize}
\item $u$ is unitary,
\item $m\in \Hom(u^{\otimes 2},u)$,
\item $\eta\in \Hom(1,u)$.
\end{itemize}
Then $\caut$ endowed with the unique comultiplication $\Delta$ such that $(\id\ot\Delta)(u)=u_{12}u_{13}$ is the quantum automorphism group of the C*-algebra $(B,\psi)$ and will be denoted $\aut$.
\end{deff}

The representation theory of $\aut$ is well known from \cite{ban02}, but, in order to generalize it to the free wreath product, we need a description of the intertwining spaces in terms of noncrossing partitions, as it was done for $\G^{aut}(\C^n,\tr)$ (see \cite{bs09}). For this aim, we recall some standard definitions about noncrossing partitions.

\begin{deff}
Let $k,l\in\N$. We use the notation $NC(k,l)$ to denote the set of noncrossing partitions between $k$ upper points and $l$ lower points. These noncrossing partitions can be represented by diagrams obtained by connecting $k$ points lying on an upper imaginary line and $l$ points lying on a lower line in all the possible ways by using strings that pass only in the part of the plane which lies between the two rows of points and that do not cross each other. A block in a noncrossing partition is a set of points connected to each other. A point without connections can be considered as a trivial block. The total number of blocks of $p\in NC(k,l)$ is denoted $b(p)$.
\end{deff}

\begin{deff}\label{opdiag}
Let $p\in NC(k,l), q\in NC(v,w)$. We define the follow diagram operations:
\begin{enumerate}
\item the tensor product $p\otimes q$ is the diagram in $NC(k+v,l+w)$ obtained by horizontal concatenation of the diagrams $p$ and $q$,
\item if $l=v$ it is possible to define the composition $qp$ as the diagram in $NC(k,w)$ obtained by identifying the lower points of $p$ with the upper points of $q$ and by removing all the blocks which have possibly appeared and which contain neither one of the upper points of $p$ nor one of the lower points of $q$; such operation, when it is defined, is associative,
\item the adjoint $p^*$ is the diagram in $NC(l,k)$ obtained by reflecting the diagram $p$ with respect to an horizontal line which lies between the rows of the upper and of the lower points.
\end{enumerate}
\end{deff}

\begin{notaz}
The blocks possibly removed when multiplying two noncrossing partitions $p\in NC(k,l)$, $q\in NC(l,w)$ will be called central blocks and their number denoted by $cb(p,q)$. 
Furthermore, the vertical concatenation can produce some (closed) cycles which will not appear in the final noncrossing partition either. Intuitively, they are the rectangles which are obtained when two or more central points are connected both in the upper and in the lower noncrossing partition (see the example below). 
In a more formal way, the number of cycles, denoted $cy(p,q)$, is defined as 
\begin{equation*}
cy(p,q)=l+b(qp)+cb(p,q)-b(p)-b(q)
\end{equation*}
\end{notaz}

\begin{example}
In order to clarify the multiplication operation and the concepts of block, central block as well as cycle, we can think of $p\in NC(4,17)$ and $q\in NC(17,5)$ in the following example:

{\centering
 \begin{tikzpicture}[thick,font=\small]
    \path (.3,.6) node{$p=$}
    		  (.3,-.6) node{$q=$}
    		  (1.5,1) node{$\bullet$} node[above](a){}
    		  (3.5,1) node{$\bullet$} node[above](b){}
    		  (5.5,1) node{$\bullet$} node[above](c){}
          (1,.2) node{$\bullet$} node[below](d){}
          (1.5,.2) node{$\bullet$} node[below](e){}
          (2,.2) node{$\bullet$} node[below](f){}
          (2.5,.2) node{$\bullet$} node[below](g){}
          (3,.2) node{$\bullet$} node[below](h){}
          (3.5,.2) node{$\bullet$} node[below](i){}
          (4,.2) node{$\bullet$} node[below](l1){}
          (4.5,.2) node{$\bullet$} node[below](m1){}
          (5,.2) node{$\bullet$} node[below](n1){}
          (5.5,.2) node{$\bullet$} node[below](o){}
          (6,.2) node{$\bullet$} node[below](p){}
          (6.5,.2) node{$\bullet$} node[below](q){}
          (1,-.2) node{$\bullet$} node[above](dd){}
          (1.5,-.2) node{$\bullet$} node[above](ee){}		  
          (2,-.2) node{$\bullet$} node[above](ff){}
          (2.5,-.2) node{$\bullet$} node[above](gg){}
          (3,-.2) node{$\bullet$} node[above](hh){}
          (3.5,-.2) node{$\bullet$} node[above](ii){}
          
          (1.2,-1) node{$\bullet$} node[below](l){}
          (1.8,-1) node{$\bullet$} node[below](m){}
          (2.75,-1) node{$\bullet$} node[below](n){}
          (4.25,-1) node{$\bullet$} node[below](r){}
          (4.75,-1) node{$\bullet$} node[below](s){}
          
          (4,-.2) node{$\bullet$} node[above](ll1){}
          (4.5,-.2) node{$\bullet$} node[above](mm1){}
          (5,-.2) node{$\bullet$} node[above](nn1){}
          (5.5,-.2) node{$\bullet$} node[above](oo){}
          (6,-.2) node{$\bullet$} node[above](pp){}
          (6.5,-.2) node{$\bullet$} node[above](qq){}
          
          (7,.2) node{$\bullet$} node[below](t){}
          (7.5,.2) node{$\bullet$} node[below](u){}
          (8,.2) node{$\bullet$} node[below](v){}
          (8.5,.2) node{$\bullet$} node[below](w){}
          (9,.2) node{$\bullet$} node[below](x){}
          
          (7,-.2) node{$\bullet$} node[above](tt){}
          (7.5,-.2) node{$\bullet$} node[above](uu){}
          (8,-.2) node{$\bullet$} node[above](vv){}
          (8.5,-.2) node{$\bullet$} node[above](ww){}
          (9,-.2) node{$\bullet$} node[above](xx){}
          (8,1) node{$\bullet$} node[above](z){};

    \draw (d) -- +(0,+0.4) -| (f);
    \draw (h) -- +(0,+0.4) -| (l1);
    \draw (l1) -- +(0,+0.4) -| (m1);
    \draw (n1) -- +(0,+0.4) -| (o);
    \draw (o) -- +(0,+0.4) -| (p);
    \draw (ii) -- +(0,-0.4) -| (ll1);
    \draw (ll1) -- +(0,-0.4) -| (nn1);
    \draw (r) -- +(0,+0.4) -| (s);
    \draw (nn1) -- +(0,-0.4) -| (oo);
    \draw (e) -- (a);
    \draw (o) -- (c);
    \draw (b) -- (i);
    \draw (mm1) -- +(0,-0.68);
    \draw (g) -- +(0,+0.4) -| (h);
    \draw (dd) -- +(0,-0.4) -| (ff);
    \draw (l) -- +(0,+0.4) -| (m);
    \draw (1.5,-.2) -- (1.5,-0.73);
    \draw (gg) -- +(0,-0.4) -| (hh);
    \draw (n) -- +(0,0.68);
    \draw (t) -- +(0,+0.6) -| (x);
    \draw (u) -- +(0,+0.4) -| (v);
    \draw (v) -- +(0,+0.4) -| (w);
    \draw (tt) -- +(0,-0.4) -| (uu);
    \draw (vv) -- +(0,-0.4) -| (ww);
    \draw (ww) -- +(0,-0.4) -| (xx);
    \draw (z) -- +(0,-0.48);

 \end{tikzpicture}
 
  \begin{tikzpicture}[thick,font=\small]
    \path (1.5,1) node{$\bullet$}
          (1,0) node{$\bullet$}
          (1.5,0) node{$\bullet$}
          (2,0) node{$\bullet$}
          (2.5,0) node{$\bullet$}
          (3,0) node{$\bullet$}
          (3.5,0) node{$\bullet$}
          (1.2,-1) node{$\bullet$}
          (1.8,-1) node{$\bullet$}
          (2.75,-1) node{$\bullet$}
          (.3,-0.03) node{$qp=$}
          (9.35,0) node{$=$}
          (10,1) node{$\bullet$}node[above](a){}
          (11,1) node{$\bullet$}node[above](b){}
          (11.5,1) node{$\bullet$}node[above](c){}
          (9.75,-1) node{$\bullet$}node[below](e){}
          (10.25,-1) node{$\bullet$}node[below](f){}
          (10.75,-1) node{$\bullet$}node[below](g){}
          (11.25,-1) node{$\bullet$}node[below](h){}
          (11.75,-1) node{$\bullet$}node[below](i){}
          (12,1) node{$\bullet$}
          
          (4,0) node{$\bullet$}
          (4.5,0) node{$\bullet$}
          (5,0) node{$\bullet$}
          (5.5,0) node{$\bullet$}
          (6,0) node{$\bullet$}
          (6.5,0) node{$\bullet$}
          (3.5,1) node{$\bullet$}
          (5.5,1) node{$\bullet$}
          (4.25,-1) node{$\bullet$}
          (4.75,-1) node{$\bullet$}
          
          (8,1) node{$\bullet$}
          (7,0) node{$\bullet$}
          (7.5,0) node{$\bullet$}
          (8,0) node{$\bullet$}
          (8.5,0) node{$\bullet$}
          (9,0) node{$\bullet$};
          
    \draw (9.75,-1) -- +(0,+0.3) -| (10.25,-1);
    \draw (10.75,-1) -- +(0,+0.3) -| (11.75,-1);
    \draw (11,1) -- +(0,-0.3) -| (11.5,1);
    \draw (10,1) -- +(0,-1.7);
    \draw (11.25,-1) -- +(0,1.7);
    \draw (8,1) -- +(0,-0.55); 
    \draw (7,0) -- +(0,+0.45) -| (9,0);
    \draw (8,0) -- +(0,+0.3) -| (8.5,0);
    \draw (7.5,0) -- +(0,+0.3) -| (8,0);
    \draw (7,0) -- +(0,-0.3) -| (7.5,0);
    \draw (8,0) -- +(0,-0.3) -| (8.5,0);
    \draw (8.5,0) -- +(0,-0.3) -| (9,0);
    \draw (1,0) -- +(0,+0.3) -| (2,0);
    \draw (1.5,0) -- (1.5,1);
    \draw (2.5,0) -- +(0,+0.3) -| (3,0);
    \draw (1,0) -- +(0,-0.3) -| (2,0);
    \draw (1.2,-1) -- +(0,+0.3) -| (1.8,-1);
    \draw (1.5,0) -- (1.5,-0.7);
    \draw (2.5,0) -- +(0,-0.3) -| (3,0);
    \draw (2.75,-1) -- +(0,0.7);   
    \draw (3,0) -- +(0,+0.3) -| (4,0);
    \draw (4,0) -- +(0,+0.3) -| (4.5,0);
    \draw (5,0) -- +(0,+0.3) -| (6,0);
    \draw (3.5,0) -- +(0,-0.3) -| (4,0);
    \draw (4,0) -- +(0,-0.3) -| (5,0);
    \draw (5,0) -- +(0,-0.3) -| (5.5,0);
    \draw (4.25,-1) -- +(0,+0.3) -| (4.75,-1);
    \draw (3.5,1) -- (3.5,0);
    \draw (5.5,1) -- (5.5,0);
    \draw (4.5,0) -- +(0,-0.7);

 \end{tikzpicture}
\par}
\noindent
We suddenly have $b(p)=6$, $b(q)=7$, $b(qp)=3$ and $cb(p,q)=1$. Then, the number of cycles is $cy(p,q)=17+3+1-6-7=8$.
\end{example}

In the following proposition, we introduce a simple but useful relation concerning the number of cycles obtained by multiplying three noncrossing partitions.

\begin{prop}\label{propcy}
Let $p\in NC(k,l)$, $r\in NC(l,m)$ and $s\in NC(m,v)$. Then the following relation holds:
\begin{equation}\label{relcy}
cy(p,sr)=cy(p,r)+cy(rp,s)-cy(r,s)
\end{equation}
\end{prop}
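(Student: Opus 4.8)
The plan is to reduce the identity to a purely combinatorial statement about \emph{central} blocks and then prove that statement by passing to a single overlay diagram on four rows.

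First I would substitute the defining relation $cy(f,g)=n+b(gf)+cb(f,g)-b(f)-b(g)$, where $n$ is the number of identified middle points, into each of the four terms of \eqref{relcy}. Using that composition is associative, so that $(sr)p=s(rp)=srp$ is unambiguous, and reading off the relevant dimensions ($rp\in NC(k,m)$, $sr\in NC(l,v)$), one checks that the quantities $b(rp)$, $b(r)$, $b(s)$ and the constant $m$ all cancel in pairs when forming $cy(p,r)+cy(rp,s)-cy(r,s)$. After this bookkeeping, the claimed relation \eqref{relcy} becomes equivalent to the single identity
\[
cb(p,sr)+cb(r,s)=cb(p,r)+cb(rp,s),\qquad(\star)
\]
so that everything is reduced to counting central blocks.

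To prove $(\star)$ I would work with the overlay $D$ of $p$, $r$, $s$ drawn on four rows $R_1,R_2,R_3,R_4$ of $k,l,m,v$ points, where the blocks of $p$ join $R_1,R_2$, those of $r$ join $R_2,R_3$, and those of $s$ join $R_3,R_4$. Record each connected component $C$ of $D$ together with the set $T(C)\subseteq\{1,2,3,4\}$ of rows it meets. The key structural remark is that, since the three partitions only connect consecutive rows, $T(C)$ is always an \emph{interval} of $\{1,2,3,4\}$: a component reaching $R_1$ and $R_3$ must pass through $R_2$, and so on. Writing $N_S$ for the number of components with $T(C)=S$, the goal is to express each central-block count through the $N_S$. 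The two ``unreduced'' counts are immediate: a central block of $(p,r)$ is exactly a component of the $p$--$r$ overlay supported on $R_2$ alone, and the $s$-strings cannot touch it, so $cb(p,r)=N_{\{2\}}$; symmetrically $cb(r,s)=N_{\{3\}}$.

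The main obstacle is the two ``reduced'' counts $cb(rp,s)$ and $cb(p,sr)$, where one argument is itself a composition in which a middle row has been hidden. For $cb(rp,s)$ I would show that the $R_3$-points of a single component of $D$ form a single component of the $rp$--$s$ overlay (the $rp$-strings faithfully record the $R_1$--$R_2$--$R_3$ connectivity and the $s$-strings the $R_3$--$R_4$ connectivity), and that this reduced component is supported on $R_3$ alone precisely when the ambient component $C$ meets neither $R_1$ nor $R_4$; since $T(C)$ is an interval containing $3$, this forces $T(C)\in\{\{3\},\{2,3\}\}$, giving $cb(rp,s)=N_{\{3\}}+N_{\{2,3\}}$, and the mirror-image argument gives $cb(p,sr)=N_{\{2\}}+N_{\{2,3\}}$. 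Substituting the four expressions into $(\star)$ makes both sides equal to $N_{\{2\}}+N_{\{3\}}+N_{\{2,3\}}$, which proves $(\star)$ and hence \eqref{relcy}. I expect the delicate point to be exactly the justification that passing to the reduced diagrams $rp$ and $sr$ neither splits nor merges the relevant components, and that it correctly translates ``supported on the middle row'' into ``avoiding the two outer rows'' in $D$; once the interval property of $T(C)$ is established, this is a clean path-tracing argument.
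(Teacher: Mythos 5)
Your argument is correct, and it shares only its first step with the paper's proof: both expand the definition of $cy$ and use associativity to reduce \eqref{relcy} to the central-block identity $cb(p,sr)+cb(r,s)=cb(p,r)+cb(rp,s)$, which is your $(\star)$. From there the two routes genuinely diverge, and yours is the one that actually works. The paper settles $(\star)$ by asserting the two \emph{separate} equalities $cb(p,sr)=cb(p,r)$ and $cb(rp,s)=cb(r,s)$, on the grounds that $cb(p,sr)$ ``does not depend on $s$''; your four-row count shows why this is too strong: $cb(p,sr)=N_{\{2\}}+N_{\{2,3\}}=cb(p,r)+N_{\{2,3\}}$ and $cb(rp,s)=N_{\{3\}}+N_{\{2,3\}}=cb(r,s)+N_{\{2,3\}}$, and $N_{\{2,3\}}$ need not vanish. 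A concrete instance: take $k=v=0$, $l=m=2$, with $p\in NC(0,2)$ and $s\in NC(2,0)$ each a single two-point block and $r\in NC(2,2)$ the identity partition; then $cb(p,sr)=cb(rp,s)=1$ while $cb(p,r)=cb(r,s)=0$, so the paper's two separate equalities fail even though $(\star)$ (and hence \eqref{relcy}) holds, precisely because the correction $N_{\{2,3\}}$ enters once on each side of $(\star)$. So your overlay argument --- the interval property of $T(C)$ together with the identifications $cb(p,r)=N_{\{2\}}$, $cb(r,s)=N_{\{3\}}$, $cb(rp,s)=N_{\{3\}}+N_{\{2,3\}}$, $cb(p,sr)=N_{\{2\}}+N_{\{2,3\}}$ --- is not merely an alternative to the paper's proof but the repair it needs. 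The one point you should still write out in full is the verification you yourself flag as delicate: that collapsing the hidden middle row to form $rp$ (resp. $sr$) induces a bijection between components of the reduced two-partition overlay and components of $D$ meeting the surviving rows, so that ``supported on the middle row'' upstairs translates exactly into ``$T(C)$ avoids $1$ and $4$'' (resp. avoids $1$ and $4$ on the mirror side) downstairs; this is a routine path-tracing check, but it is where the paper's shortcut silently loses the $N_{\{2,3\}}$ components.
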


\begin{proof}
By making use of the definition of a cycle and of the associativity of the composition, the relation (\ref{relcy}) reduces to $cb(p,sr)=cb(p,r)+cb(rp,s)-cb(r,s)$. In order to complete the proof, it is then enough to observe that $cb(p,sr)=cb(p,r)$, because the number of central blocks obtained by concatenating $p$ and $sr$ does not depend on the noncrossing partition $s$; similarly $cb(rp,s)=cb(r,s)$.
\end{proof}

The next goal is to assign a linear map to every noncrossing partition. In order to define such a map, we will make use of the following notation which generalizes the classical one.

\begin{notaz}\label{coeff}
Consider a diagram $p\in NC(k,l)$ and associate to every point a matrix unit of the canonical basis of $B$. Let $(b_{i_1j_1}^{\alpha_1},\dots,b_{i_kj_k}^{\alpha_k})$ be the ordered set of elements associated to the upper points and $(b_{r_1s_1}^{\beta_1},\dots,b_{r_ls_l}^{\beta_l})$ the elements associated to the lower points. Let $ij$ and $\alpha$ be the multi-index notation for the indices of these matrices, in particular $ij=((i_1,j_1),\dots,(i_k,j_k))$ and $\alpha=(\alpha_1,\dots,\alpha_k)$; in a similar way, we define $rs$ and $\beta$.\\
Denote by $b_v,v=1,\dots,m$ the different blocks of $p$, and let $b_v^\uparrow$ ($b_v^\downarrow$) be the ordered product of the matrix units associated to the upper (lower) points of the block $b_v$. Such a product is conventionally the identity matrix, if there are no upper (lower) points in the block. Define
\begin{equation}\label{coeffd}
\delta_p^{\alpha,\beta}(ij,rs):=\prod_{v=1}^m \psi((b_v^\downarrow)^*b_v^\uparrow)
\end{equation}
\end{notaz}

\begin{example}
Consider the following noncrossing partition $p$ in which we associated an element of the basis to every point.

{\centering
 \begin{tikzpicture}[thick,font=\small]
    \path (2,1) node{$\bullet$} node[above](a){$b_{i_1j_1}^{\alpha_1}$}
          (1,.2) node{$\bullet$} node[below](d){$b_{r_1s_1}^{\beta_1}$}
          (2,.2) node{$\bullet$} node[below](e){$b_{r_2s_2}^{\beta_2}$}
          (3,.2) node{$\bullet$} node[below](f){$b_{r_3s_3}^{\beta_3}$}
          (3,1) node{$\bullet$} node[above](g){$b_{i_2j_2}^{\alpha_2}$}
          (4,1) node{$\bullet$} node[above](h){$b_{i_3j_3}^{\alpha_3}$}
          (4,.2) node{$\bullet$} node[below](i){$b_{r_4s_4}^{\beta_4}$};
          
    \draw (d) -- +(0,+0.6) -| (f);
    \draw (e) -- (a);
    \draw (g) -- +(0,-0.6) -|(h);
 \end{tikzpicture}
\par}
\noindent
In this case the coefficient just introduced is $$\delta_p^{\alpha,\beta}(ij,rs)=\psi((b_{r_1s_1}^{\beta_1}b_{r_2s_2}^{\beta_2}b_{r_3s_3}^{\beta_3})^*b_{i_1j_1}^{\alpha_1})\psi(b_{i_2j_2}^{\alpha_2}b_{i_3j_3}^{\alpha_3})\psi((b_{r_4s_4}^{\beta_4})^*)$$
\end{example}

\begin{deff}\label{map2}
We associate to every element $p\in NC(k,l)$ the linear map $T_p:B^{\otimes k}\longrightarrow B^{\otimes l}$ which is defined by:
$$T_p(b_{i_1j_1}^{\alpha_1} \otimes\cdots \otimes b_{i_kj_k}^{\alpha_k})=\sum_{r,s,\beta}\delta_p^{\alpha,\beta}(ij,rs)b_{r_1s_1}^{\beta_1} \otimes\cdots\otimes b_{r_ls_l}^{\beta_l}$$
\end{deff}

\begin{example}\label{exmap}
The diagram $p$ which is associated to the multiplication map $m$ (writing explicitly  $b_{i_1,j_1}^{\alpha_1}, b_{i_2,j_2}^{\alpha_2}$ on the upper points and $b_{r_1,s_1}^{\beta_1}$ on the lower point) is:

{\centering
 \begin{tikzpicture}[thick,font=\small]
    \path (0,.8) node{$\bullet$} node[above](a) {$b_{i_1 j_1}^{\alpha_1}$}
          (1,.8) node{$\bullet$} node[above](b) {$b_{i_2 j_2}^{\alpha_2}$}
          (.5,0) node{$\bullet$} node[below](c) {$b_{r_1 s_1}^{\beta_1}$};

    \draw (c) -- +(0,0.78);
    \draw (a) -- +(0,-0.65) -| (b);
 \end{tikzpicture}
\par}
\noindent
Here, by applying the definition
$$\left.\begin{array}{lll}
\delta_p^{\alpha,\beta}{((i_1,j_1,i_2,j_2),(r_1,s_1))}&=&
\psi((\psi(e_{s_1 s_1}^{\beta_1})^{-\um}e_{r_1 s_1}^{\beta_1})^*\psi(e_{j_1 j_1}^{\alpha_1})^{-\um}e_{i_1 j_1}^{\alpha_1}\psi(e_{j_2 j_2}^{\alpha_2})^{-\um}e_{i_2 j_2}^{\alpha_2})\\ &=& 

\psi(e_{s_1 s_1}^{\beta_1})^{-\um}\psi(e_{j_1 j_1}^{\alpha_1})^{-\um}\psi(e_{j_2 j_2}^{\alpha_2})^{-\um}\psi(e_{s_1 r_1}^{\beta_1}e_{i_1 j_1}^{\alpha_1}e_{i_2 j_2}^{\alpha_2})\\ &=& 
\delta_{\beta_1,\alpha_1}\delta_{\alpha_1,\alpha_2}\delta_{r_1,i_1}\delta_{j_1,i_2}\delta_{s_1,j_2}\psi(e_{j_1 j_1}^{\alpha_1})^{-\um}
\end{array}\right.$$
so the associated map $T_p:B^{\otimes 2}\longrightarrow B$ is given by $$T_p(b_{i_1j_1}^{\alpha_1}\otimes b_{i_2j_2}^{\alpha_2})=\delta_{\alpha_1,\alpha_2}\delta_{j_1,i_2}\psi(e_{j_1 j_1}^{\alpha_1})^{-\um}b_{i_1j_2}^{\alpha_1}$$ which is the multiplication $m$.
\end{example}

Then, we have the following compatibility result (see \cite[Proposition 1.9]{bs09} for the case of $\C^n$ with the canonical trace).

\begin{prop}\label{propmap}
Let $p\in NC(l,k), q\in NC(v,w)$. We have:
\begin{enumerate}
\item $T_{p\otimes q}=T_p\otimes T_q$,
\item $T_p^*=T_{p^*}$,
\item if $k=v$ then $T_{qp}=\delta^{-cy(p,q)}T_q T_p$.
\end{enumerate}
\end{prop}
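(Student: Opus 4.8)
The three assertions can all be reduced to identities between the scalar coefficients $\delta_p^{\alpha,\beta}(ij,rs)$ of Notation \ref{coeff}: since $\mathscr{B}'$ is orthonormal, the matrix of $T_p$ in the tensor basis built from $\mathscr{B}'$ has precisely these entries. For (1), I would observe that $p\otimes q$ is a horizontal concatenation, so its block set is the disjoint union of the blocks of $p$ and of $q$, with no point of $p$ joined to a point of $q$; the product over blocks in \eqref{coeffd} therefore splits as $\delta_{p\otimes q}=\delta_p\,\delta_q$, and $T_{p\otimes q}=T_p\otimes T_q$ is immediate from Definition \ref{map2}. For (2), the matrix of the Hilbert-space adjoint $T_p^*$ is the conjugate transpose of that of $T_p$, so it suffices to check $\overline{\delta_p^{\alpha,\beta}(ij,rs)}=\delta_{p^*}^{\beta,\alpha}(rs,ij)$. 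This follows blockwise from $\overline{\psi(x)}=\psi(x^*)$ (valid as $\psi$ is a state) applied to $x=(b_v^\downarrow)^*b_v^\uparrow$, once one notes that reflecting a diagram interchanges the upper and lower points of each block while keeping their left-to-right order, so that $b_v^\uparrow$ and $b_v^\downarrow$ merely swap roles in $p^*$.

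The real content is (3). Writing $k$ for the common number of middle points and evaluating $T_qT_p$ on a basis tensor of $B^{\otimes l}$, the coefficient of a fixed output basis tensor equals $\sum_{\mathrm{mid}}\delta_p\,\delta_q$, the sum ranging over all labelings of the $k$ middle points by elements of $\mathscr{B}'$. Thus (3) amounts to the scalar identity $\sum_{\mathrm{mid}}\delta_p\,\delta_q=\delta^{\,cy(p,q)}\,\delta_{qp}$ for every choice of outer labels. I would organize this middle sum through the bipartite \emph{gluing graph} $G$ whose vertices are the blocks of $p$ and the blocks of $q$ and whose edges are the $k$ middle points, each joining the block of $p$ and the block of $q$ containing it. Since a middle index appears only in the two factors attached to its incident blocks, the sum factorizes over the connected components of $G$.

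It then remains to evaluate each component and count the powers of $\delta$. A component meeting an upper point of $p$ or a lower point of $q$ yields a block of $qp$, while a component meeting none is a central block, so $G$ has $b(qp)+cb(p,q)$ components. On a tree component the summation telescopes by the resolution of identity $x=\sum_{b\in\mathscr{B}'}\psi(b^*x)\,b$ (Parseval in $\mathscr{B}'$), reproducing exactly the factor $\psi\big((\cdot)^*(\cdot)\big)$ of the associated block of $qp$, or the scalar $1$ for a central tree, and contributing no $\delta$. Each independent cycle instead contributes one factor $\delta$, the model case being the closed two-point loop $\sum_{a,b\in\mathscr{B}'}|\psi(b_ab_b)|^2=\sum_\alpha\Tr(Q_\alpha^{-1})\Tr(Q_\alpha)=\delta$, where the $\delta$-form hypothesis $\Tr(Q_\alpha^{-1})=\delta$ is exactly what is used. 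Summing over components, the total exponent of $\delta$ is the cycle rank of $G$, namely $(\text{edges})-(\text{vertices})+(\text{components})=k-\big(b(p)+b(q)\big)+\big(b(qp)+cb(p,q)\big)=cy(p,q)$, which is the claimed identity.

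The routine parts are (1), (2) and the tree evaluations; the delicate point—and the step I expect to be the main obstacle—is the cycle bookkeeping: verifying that, once the trees are collapsed, every remaining component contributes precisely one factor $\delta$ per independent loop, uniformly over all summands $M_{n_\alpha}(\C)$. This is exactly the combinatorial incarnation of $mm^*=\delta\,\id_B$, and can be secured either by the component-by-component evaluation sketched above or by an induction that removes one loop at a time.
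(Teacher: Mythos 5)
Parts (1) and (2) of your argument match the paper's. For part (3) you take a genuinely different route: the paper decomposes $q$ into elementary partitions $\id_B^{\ot u}\ot f\ot \id_B^{\ot v}$ with $f\in\{\id_B,m,m^*,\eta,\eta^*\}$, telescopes the exponents with the cocycle identity of Proposition \ref{propcy}, and then verifies the scalar identity \eqref{coeffcomp} case by case through explicit matrix-unit computations; you propose instead a one-shot evaluation of the middle sum organized by the gluing graph $G$. Your combinatorial bookkeeping is correct and in fact clarifying: the components of $G$ are in bijection with the blocks of $qp$ together with the central blocks, so the cycle rank $k-(b(p)+b(q))+(b(qp)+cb(p,q))$ equals $cy(p,q)$ \emph{by definition} --- this explains where the formula defining $cy$ comes from, something the paper leaves implicit.

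There is, however, a genuine gap exactly where you flag it, and it is not mere bookkeeping. Your two evaluation claims --- tree components telescope by Parseval with no $\delta$, and each independent loop yields exactly one $\delta$ --- are verified only on a model case, and in the non-tracial setting they are not routine. A middle basis element $b$ enters the factor of its $p$-block as an \emph{interior} term of the ordered product $(b_v^\downarrow)^*=\cdots b^*\cdots$ and the factor of its $q$-block as an interior term of $b_w^\uparrow=\cdots b\cdots$; since $\psi=\oplus_\alpha\Tr(Q_\alpha\,\cdot)$ is not assumed tracial, you cannot cyclically rotate $b$ to the end of either product, so $\sum_b\psi((xby)^*u)\,\psi(w^*sbt)$ is not literally an instance of $x=\sum_b\psi(b^*x)b$. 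Making the contraction close requires tracking the weights $Q_{i,\alpha}^{\pm\um}$ carried by the normalized matrix units and using planarity to control where $b$ sits; this is precisely the content of the paper's explicit computations (note the factor $Q_{j_l,\gamma_k}^{-1}$ that has to be produced and then reabsorbed in the two-block $m$ case). Likewise, ``one $\delta$ per independent loop'' needs an argument that, after the trees are collapsed, every loop reduces to an adjacent insertion $\sum_b b\,(\cdot)\,b^*$, i.e.\ to $mm^*=\delta\,\id_B$; your two-point loop confirms the simplest configuration, but the general nested case is unaddressed. To complete the proof you should either carry out the component-by-component evaluation with the $Q$-weights explicit, or fall back on the paper's reduction to elementary maps --- in which case your graph picture still provides the cleanest way to see that the accumulated exponent of $\delta$ is $cy(p,q)$.
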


\begin{proof}
Even if the core of the proof is essentially the same as \cite{bs09}, it is necessary to pay much more attention to the computations.\\
The relation 1 is clear because $\delta_p^{\alpha,\beta}(ij,rs)\delta_q^{\alpha',\beta'}(IJ,RS)=\delta_{p\otimes q}^{\alpha\alpha',\beta\beta'}(ijIJ,rsRS)$.\\
The relation 2 follows from $\delta_p^{\alpha,\beta}(ij,rs)=\delta_{p^*}^{\beta,\alpha}(rs,ij)$, which is true because $\psi((b_{ij}^{\alpha})^*)$ $=\psi(b_{ij}^{\alpha})$ (we point out that the matrix associated to $\psi$ has real eigenvalues).\\
The relation 3 is more difficult to prove true. In particular, we need to show that
\begin{equation}\label{coeffcomp}
\sum_\beta\sum_{r,s=1}^{n_\beta}\delta_p^{\alpha,\beta}(ij,rs)\delta_q^{\beta,\gamma}(rs,RS)=\delta^{cy(p,q)}\delta_{qp}^{\alpha,\gamma}(ij,RS)
\end{equation}
\noindent
We remind that every noncrossing partition is obtained by using compositions, tensor products and adjoints of the basic morphisms $m,\eta$ and $\id_B$.
In order to prove the composition formula between the maps associated to two noncrossing partitions $p$ and $q$, we can think of decomposing $q$ in the composition of a sequence of noncrossing partitions corresponding to elementary maps of type $\id_B^{\otimes u}\otimes f \otimes \id_B^{\otimes v}$ where $u,v\in\N$ and $f=\id_B,m,m^*,\eta,\eta^*$.

If we suppose that relation 3 holds for the composition of a general noncrossing partition $p$ with this kind of maps and let $q=q_s\cdots q_2q_1$ be such a decomposition of $q$, then we have

$$\left.\begin{array}{lll}
T_{qp}&=&T_{q_s \cdots q_2q_1p}\\ &=&\delta^{-cy(q_{s-1}\cdots q_1p,q_s)}T_{q_s}T_{q_{s-1}\cdots q_1p}\\ &=&
\delta^{-cy(q_{s-1}\cdots q_1p,q_s)-cy(q_{s-2}\cdots q_1p,q_{s-1})-\cdots -cy(p,q_1)}T_{q_s}T_{q_{s-1}}\cdots T_{q_1}T_p\\ &=&

\delta^{-cy(q_{s-1}\cdots q_1p,q_s)-cy(q_{s-2}\cdots q_1p,q_{s-1})-\cdots -cy(p,q_1)+cy(q_1,q_2)}   T_{q_s}\cdots T_{q_2q_1}T_p\\ &=&

\delta^{-cy(q_{s-1}\cdots q_1p,q_s)-\cdots -cy(p,q_1)+cy(q_1,q_2)+\cdots +cy(q_{s-1}\cdots q_1,q_s)}  T_{q_s\cdots q_2q_1}T_p\\ &=&

\delta^{-cy(p,q_s\cdots q_1)}T_{q_s\cdots q_1}T_p\\ &=&
\delta^{-cy(p,q)}T_{q}T_p
\end{array}\right.$$
where the second to last equality follows by applying $s$ times Proposition \ref{propcy}.

Now we only need to show the relation (\ref{coeffcomp}) when composing the map associated to $p$ with an elementary map. Furthermore, we can consider that the noncrossing partition $p$ has only one block (or possibly two when $f=m$); indeed it is possible to reconstruct the multi-block case by using a tensor product argument or by generalizing the following proof in an obvious way.
Let's start now the computations in the different cases. Let $p\in NC(l,k)$ be a one block noncrossing partition. The first case we take into account is the composition of $T_p$ with $T_q=\id_B^{\otimes k}$. The corresponding diagram is the following:

{\centering
 \begin{tikzpicture}[thick,font=\small]
    \path 
    		  (1,1.5) node{$\bullet$} node[above](a){$b_{i_1,j_1}^{\alpha_1}$}
		  (2,1.5) node{$\bullet$} node[above](b){$b_{i_2,j_2}^{\alpha_2}$}
		  (4,1.5) node{$\bullet$} node[above](c){$b_{i_l,j_l}^{\alpha_l}$}          
          
          (1,.4) node{$\bullet$} node[below](d){$b_{r_1,s_1}^{\beta_1}$}
          (2,.4) node{$\bullet$} node[below](e){$b_{r_2,s_2}^{\beta_2}$}
          (4,.4) node{$\bullet$} node[below](f){$b_{r_k,s_k}^{\beta_k}$}

          (1,-.3) node{$\bullet$} node[above](dd){}
          (2,-.3) node{$\bullet$} node[above](ee){}
          (4,-.3) node{$\bullet$} node[above](ff){}
          
          (1,-.9) node{$\bullet$} node[below](g){$b_{R_1,S_1}^{\gamma_1}$}
          (2,-.9) node{$\bullet$} node[below](h){$b_{R_2,S_2}^{\gamma_2}$}
          (4,-.9) node{$\bullet$} node[below](i){$b_{R_k,S_k}^{\gamma_k}$};

    \draw (a) -- +(0,-0.7) -| (b);
    \draw (b) -- +(0,-0.7) -| (c);
    \draw (d) -- +(0,+0.7) -| (e);
    \draw (e) -- +(0,+0.7) -| (f);
    \draw (dd) -- (g);
    \draw (ee) -- (h);
    \draw (ff) -- (i);
    \draw (2.5,1.12) -- (2.5,0.77);
     \draw [dotted] (2.5,1.4) -- (3.5,1.4);
     \draw [dotted] (2.5,.5) -- (3.5,.5);
     \draw [dotted] (2.5,-.6) -- (3.5,-.6);
     
\end{tikzpicture}
\par}
\noindent
In this case relation (\ref{coeffcomp}) is satisfied, indeed
\begin{eqnarray*}
\sum_\beta\sum_{r,s=1}^{n_\beta}\delta_p^{\alpha,\beta}(ij,rs)\delta_q^{\beta,\gamma}(rs,RS)
 & =&\sum_\beta\sum_{r,s=1}^{n_\beta}\psi((b_{r_1,s_1}^{\beta_1}\cdots b_{r_k,s_k}^{\beta_k})^*(b_{i_1,j_1}^{\alpha_1}\cdots b_{i_l,j_l}^{\alpha_l}))
\prod_{t=1}^k\psi((b_{R_t,S_t}^{\gamma_t})^*b_{r_t,s_t}^{\beta_t})\\
 &=&\sum_\beta\sum_{r,s=1}^{n_\beta}\psi((b_{r_1,s_1}^{\beta_1}\cdots b_{r_k,s_k}^{\beta_k})^*(b_{i_1,j_1}^{\alpha_1}\cdots b_{i_l,j_l}^{\alpha_l}))
\prod_{t=1}^k\delta_{\gamma_t \beta_t}\delta_{R_t r_t}\delta_{S_t s_t}\\
 &=&\psi((b_{R_1,S_1}^{\gamma_1}\cdots b_{R_k,S_k}^{\gamma_k})^*(b_{i_1,j_1}^{\alpha_1}\cdots b_{i_l,j_l}^{\alpha_l}))\\
 &=& \delta_{qp}^{\alpha,\gamma}(ij,RS)
\end{eqnarray*}
A second possible case is the composition of $T_p$ with $T_q=\id_B^{\otimes u}\otimes \eta^* \otimes \id_B^{\otimes v}$. Here there are two different situations which deserve to be considered. If $T_p=\eta$, a simple computation shows that $\eta^*\eta=\id_{\C}$ (because $\psi$ is normalized) and the relation (\ref{coeffcomp}) is satisfied. For all the other possible $T_p$, the general diagram is the following:

{\centering
 \begin{tikzpicture}[thick,font=\small]
    \path 
    		  (1,1.5) node{$\bullet$} node[above](a){$b_{i_1,j_1}^{\alpha_1}$}
		  (2,1.5) node{$\bullet$} node[above](b){$b_{i_2,j_2}^{\alpha_2}$}
		  (5,1.5) node{$\bullet$} node[above](c){$b_{i_l,j_l}^{\alpha_l}$}          
          
          (1,.4) node{$\bullet$} node[below](d){$b_{r_1,s_1}^{\beta_1}$}
          (2,.4) node{$\bullet$} node[below](e){$b_{r_2,s_2}^{\beta_2}$}
          (3.5,.4) node{$\bullet$} node[below](l){$b_{r_z,s_z}^{\beta_z}$}
          (5,.4) node{$\bullet$} node[below](f){$b_{r_k,s_k}^{\beta_k}$}

          (1,-.3) node{$\bullet$} node[above](dd){}
          (2,-.3) node{$\bullet$} node[above](ee){}
          (3.5,-.3) node{$\bullet$} node[above](ll){}
          (5,-.3) node{$\bullet$} node[above](ff){}
          
          (1,-.9) node{$\bullet$} node[below](g){$b_{R_1,S_1}^{\gamma_1}$}
          (2,-.9) node{$\bullet$} node[below](h){$b_{R_2,S_2}^{\gamma_2}$}
          (5,-.9) node{$\bullet$} node[below](i){$b_{R_k,S_k}^{\gamma_k}$};

    \draw (a) -- +(0,-0.7) -| (b);
    \draw (b) -- +(0,-0.7) -| (c);
    \draw (d) -- +(0,+0.7) -| (e);
    \draw (e) -- +(0,+0.7) -| (l);
    \draw (l) -- +(0,+0.7) -| (f); 
    \draw (dd) -- (g);
    \draw (ee) -- (h);
    \draw (ff) -- (i);
    \draw (3,1.12) -- (3,0.77);
     \draw [dotted] (2.5,1.4) -- (4.5,1.4);
     \draw [dotted] (2.5,.5) -- (3,.5);
     \draw [dotted] (4,.5) -- (4.5,.5);
     \draw [dotted] (2.5,-.6) -- (3,-.6);
     \draw [dotted] (4,-.6) -- (4.5,-.6);
     
\end{tikzpicture}
\par}
\noindent
With respect to the previous case in the lower noncrossing partition $q$, one of the identity maps was replaced by $\eta^*$ (in this case, with a little abuse of notation, we removed $b_{R_z,S_z}^{\gamma_z}$ but we did not reassign the index $z$). 
The relation (\ref{coeffcomp}) is verified because
\begin{eqnarray*}
\lefteqn{\sum_\beta\sum_{r,s=1}^{n_\beta}\delta_p^{\alpha,\beta}(ij,rs)\delta_q^{\beta,\gamma}(rs,RS)}\\
&& \hspace{3cm} =\sum_\beta\sum_{r,s=1}^{n_\beta}\psi((b_{r_1,s_1}^{\beta_1}\cdots b_{r_k,s_k}^{\beta_k})^*(b_{i_1,j_1}^{\alpha_1}\cdots b_{i_l,j_l}^{\alpha_l}))
\psi(b_{r_z,s_z}^{\beta_z})\prod_{t=1,t\neq z}^k\psi((b_{R_t,S_t}^{\gamma_t})^*b_{r_t,s_t}^{\beta_t})\\
&& \hspace{3cm} =\sum_\beta\sum_{r,s=1}^{n_\beta}\psi((b_{r_1,s_1}^{\beta_1}\cdots b_{r_k,s_k}^{\beta_k})^*(b_{i_1,j_1}^{\alpha_1}\cdots b_{i_l,j_l}^{\alpha_l}))
\delta_{r_z s_z}Q_{s_z,\beta_z}^{\um}\prod_{t=1,t\neq z}^k \delta_{\gamma_t \beta_t}\delta_{R_t r_t}\delta_{S_t s_t}\\
&& \hspace{3cm} =\sum_\beta\sum_{r_z=1}^{n_\beta}\psi((b_{R_1,S_1}^{\gamma_1}\cdots e_{r_z,s_z}^{\beta_z}\cdots b_{R_k,S_k}^{\gamma_k})^*(b_{i_1,j_1}^{\alpha_1}\cdots b_{i_l,j_l}^{\alpha_l}))\\
&& \hspace{3cm} =\psi((b_{R_1,S_1}^{\gamma_1}\cdots b_{R_{z-1},S_{z-1}}^{\gamma_{z-1}}b_{R_{z+1},S_{z+1}}^{\gamma_{z+1}}\cdots b_{R_k,S_k}^{\gamma_k})^*(b_{i_1,j_1}^{\alpha_1}\cdots b_{i_l,j_l}^{\alpha_l}))\\
&& \hspace{3cm} =\delta_{qp}^{\alpha,\gamma}(ij,RS)
\end{eqnarray*}

\noindent
The third case we analyse is the composition of $T_p$ with $T_q=\id_B^{\otimes u}\otimes m \otimes \id_B^{\otimes v}$. There are two different situations which deserve to be considered: the two upper points of $m$ can be connected either to one block or to two different blocks of the noncrossing partition $p$. In the first sub-case a cycle appears and the diagram is:

{\centering
 \begin{tikzpicture}[thick,font=\small]
    \path 
    		  (1,1.5) node{$\bullet$} node[above](a){$b_{i_1,j_1}^{\alpha_1}$}
		  (2,1.5) node{$\bullet$} node[above](b){$b_{i_2,j_2}^{\alpha_2}$}
		  (6.5,1.5) node{$\bullet$} node[above](c){$b_{i_l,j_l}^{\alpha_l}$}          
          
          (1,.4) node{$\bullet$} node[below](d){$b_{r_1,s_1}^{\beta_1}$}
          (2,.4) node{$\bullet$} node[below](e){$b_{r_2,s_2}^{\beta_2}$}
          (3.5,.4) node{$\bullet$} node[below](l){$b_{r_z,s_z}^{\beta_z}$}
          (5,.4) node{$\bullet$} node[below](lk){$b_{r_{z+1},s_{z+1}}^{\beta_{z+1}}$}
          (6.5,.4) node{$\bullet$} node[below](f){$b_{r_k,s_k}^{\beta_k}$}

          (1,-.3) node{$\bullet$} node[above](dd){}
          (2,-.3) node{$\bullet$} node[above](ee){}
          (3.5,-.3) node{$\bullet$} node[above](ll){}
          (5,-.3) node{$\bullet$} node[above](llk){}
          (6.5,-.3) node{$\bullet$} node[above](ff){}
          
          (1,-1) node{$\bullet$} node[below](g){$b_{R_1,S_1}^{\gamma_1}$}
          (2,-1) node{$\bullet$} node[below](h){$b_{R_2,S_2}^{\gamma_2}$}
          (4.25,-1) node{$\bullet$} node[below](hk){$b_{R_z,S_z}^{\gamma_z}$}
          (6.5,-1) node{$\bullet$} node[below](i){$b_{R_k,S_k}^{\gamma_k}$};

    \draw (a) -- +(0,-0.7) -| (b);
    \draw (b) -- +(0,-0.7) -| (c);
    \draw (d) -- +(0,+0.7) -| (e);
    \draw (e) -- +(0,+0.7) -| (l);
    \draw (l) -- +(0,+0.7) -| (f);
    \draw (lk) -- +(0,+0.73) -| (l);
    \draw (ll) -- +(0,-0.48) -| (llk);
    \draw (dd) -- (g);
    \draw (ee) -- (h);
    \draw (ff) -- (i);
    \draw (3.75,1.12) -- (3.75,0.77);
    \draw (4.25,-.65) -- (4.25,-1);
     \draw [dotted] (2.5,1.4) -- (6,1.4);
     \draw [dotted] (2.5,.5) -- (3,.5);
     \draw [dotted] (5.5,.5) -- (6,.5);
     \draw [dotted] (2.5,-.65) -- (3,-.65);
     \draw [dotted] (5.5,-.65) -- (6,-.65);
     
\end{tikzpicture}
\par}
\noindent
As in the previous case, one of the points of the lower line was removed and its index (in this case $z+1$) was not reassigned.
The relation (\ref{coeffcomp}) is still verified and the factor $\delta$ implied by the cycle appears. Indeed, we have

\begin{align*}
&\sum_\beta\sum_{r,s=1}^{n_\beta}\delta_p^{\alpha,\beta}(ij,rs)\delta_q^{\beta,\gamma}(rs,RS)\\
& =\sum_\beta\sum_{r,s=1}^{n_\beta}\psi((b_{r_1,s_1}^{\beta_1}\cdots  b_{r_k,s_k}^{\beta_k})^*(b_{i_1,j_1}^{\alpha_1}\cdots b_{i_l,j_l}^{\alpha_l}))
\psi((b_{R_z,S_z}^{\gamma_z})^*b_{r_z,s_z}^{\beta_z}b_{r_{z+1},s_{z+1}}^{\beta_{z+1}})\prod_{t=1,t\neq z,z+1}^k\psi((b_{R_t,S_t}^{\gamma_t})^*b_{r_t,s_t}^{\beta_t})\\
& =\sum_\beta\sum_{r,s=1}^{n_\beta}\psi((b_{r_1,s_1}^{\beta_1}\cdots  b_{r_k,s_k}^{\beta_k})^*(b_{i_1,j_1}^{\alpha_1}\cdots b_{i_l,j_l}^{\alpha_l}))
\delta_{\beta_z\gamma_z}\delta_{\beta_{z+1}\gamma_z}\delta_{r_z R_z} \delta_{s_z r_{z+1}}\delta_{s_{z+1}S_z} Q_{s_z,\beta_z}^{-\um}\prod_{t=1,t\neq z,z+1}^k(\delta_{\gamma_t \beta_t}\delta_{R_t r_t}\delta_{S_t s_t})\\
& =\sum_{s_z=1}^{n_{\gamma_z}}Q_{s_z,\gamma_z}^{-1}\psi((b_{R_1,S_1}^{\gamma_1}\cdots  e_{R_z,s_z}^{\gamma_z}b_{s_z,S_z}^{\gamma_z}b_{R_{z+2},S_{z+2}}^{\gamma_{z+2}} \cdots b_{R_k,S_k}^{\gamma_k})^*(b_{i_1,j_1}^{\alpha_1}\cdots  b_{i_l,j_l}^{\alpha_l}))
\end{align*}
\begin{align*}
&\hspace{-7.3cm} =\delta\cdot\psi((b_{R_1,S_1}^{\gamma_1}\cdots b_{R_{z},S_{z}}^{\gamma_{z}} b_{R_{z+2},S_{z+2}}^{\gamma_{z+2}}\cdots b_{R_k,S_k}^{\gamma_k})^* (b_{i_1,j_1}^{\alpha_1}\cdots b_{i_l,j_l}^{\alpha_l}))\\
&\hspace{-7.3cm} =\delta\cdot\delta_{qp}^{\alpha,\gamma}(ij,RS)
\end{align*}
The diagram of the sub-case with two blocks (with $p\in NC(l+l',k+k')$) follows:

{\centering
 \begin{tikzpicture}[thick,font=\small]
    \path (1,1.5) node{$\bullet$} node[above](a){$b_{i_1,j_1}^{\alpha_1}$}
		  (2,1.5) node{$\bullet$} node[above](b){$b_{i_2,j_2}^{\alpha_2}$}
		  (3.5,1.5) node{$\bullet$} node[above](c){$b_{i_l,j_l}^{\alpha_l}$}          
          
          (1,.4) node{$\bullet$} node[below](d){$b_{r_1,s_1}^{\beta_1}$}
          (2,.4) node{$\bullet$} node[below](e){$b_{r_2,s_2}^{\beta_2}$}
          (3.5,.4) node{$\bullet$} node[below](f){$b_{r_k,s_k}^{\beta_k}$}

    		  (4.5,1.5) node{$\bullet$} node[above](aq){$b_{i_1',j_1'}^{\alpha_1'}$}
		  (5.5,1.5) node{$\bullet$} node[above](bq){$b_{i_2',j_2'}^{\alpha_2'}$}
		  (7,1.5) node{$\bullet$} node[above](cq){$b_{i_{l'}',j_{l'}'}^{\alpha_{l'}'}$}          
          
          (4.5,.4) node{$\bullet$} node[below](dq){$b_{r_1',s_1'}^{\beta_1'}$}
          (5.5,.4) node{$\bullet$} node[below](eq){$b_{r_2',s_2'}^{\beta_2'}$}
          (7,.4) node{$\bullet$} node[below](fq){$b_{r_{k'}',s_{k'}'}^{\beta_{k'}'}$}

          (1,-.5) node{$\bullet$} node[above](dd){}
          (2,-.5) node{$\bullet$} node[above](ee){}
          (3.5,-.5) node{$\bullet$} node[above](ff){}
          
          (1,-1.2) node{$\bullet$} node[below](g){$b_{R_1,S_1}^{\gamma_1}$}
          (2,-1.2) node{$\bullet$} node[below](h){$b_{R_2,S_2}^{\gamma_2}$}
          (4,-1.2) node{$\bullet$} node[below](i){$b_{R_k,S_k}^{\gamma_k}$}
          
          (4.5,-.5) node{$\bullet$} node[above](ddq){}
          (5.5,-.5) node{$\bullet$} node[above](eeq){}
          (7,-.5) node{$\bullet$} node[above](ffq){}
          
          (5.5,-1.2) node{$\bullet$} node[below](hq){$b_{R_2',S_2'}^{\gamma_2'}$}
          (7,-1.2) node{$\bullet$} node[below](iq){$b_{R_{k'}',S_{k'}'}^{\gamma_{k'}'}$};

    \draw (a) -- +(0,-0.7) -| (b);
    \draw (b) -- +(0,-0.7) -| (c);
    \draw (d) -- +(0,+0.7) -| (e);
    \draw (e) -- +(0,+0.7) -| (f);   
    \draw (aq) -- +(0,-0.79) -| (bq);
    \draw (bq) -- +(0,-0.79) -| (cq);
    \draw (dq) -- +(0,+0.79) -| (eq);
    \draw (eq) -- +(0,+0.79) -| (fq);
    \draw (ff) -- +(0,-0.48) -| (ddq);
    \draw (dd) -- (g);
    \draw (ee) -- (h);
    \draw (eeq) -- (hq);
    \draw (ffq) -- (iq);
    \draw (2.25,1.12) -- (2.25,0.77);
    \draw (5.75,1.12) -- (5.75,0.77);
    \draw (4,-.85) -- (4,-1.2);
     \draw [dotted] (6,1.4) -- (6.5,1.4);
     \draw [dotted] (6,.5) -- (6.5,.5);
     \draw [dotted] (6,-.85) -- (6.5,-.85); 
     \draw [dotted] (2.5,1.4) -- (3,1.4);
     \draw [dotted] (2.5,.5) -- (3,.5);
     \draw [dotted] (2.5,-.85) -- (3,-.85);
     
\end{tikzpicture}
\par}
\noindent
Relation (\ref{coeffcomp}) is still verified, indeed
\begin{align*}
&\sum_{\beta,\beta'}\sum_{r,s=1}^{n_\beta}\sum_{r',s'=1}^{n_{\beta'}}\delta_p^{\alpha\alpha',\beta\beta'}(ii'jj,'rr'ss')\delta_q^{\beta\beta',\gamma\gamma'}(rr'ss',RR'SS')=\\
&\sum_{\beta,\beta'}\sum_{r,s=1}^{n_\beta}\sum_{r',s'=1}^{n_{\beta'}}\psi((b_{r_1,s_1}^{\beta_1}\cdots  b_{r_k,s_k}^{\beta_k})^*(b_{i_1,j_1}^{\alpha_1}\cdots b_{i_l,j_l}^{\alpha_l})) \psi((b_{r_1',s_1'}^{\beta_1'}\cdots  b_{r_{k'}',s_{k'}'}^{\beta_{k'}'})^*(b_{i_1',j_1'}^{\alpha_1'}\cdots  b_{i_{l'}',j_{l'}'}^{\alpha_l}))\\
&\hspace{5cm}\prod_{t=1}^{k-1}\psi((b_{R_t,S_t}^{\gamma_t})^*b_{r_t,s_t}^{\beta_t}) \psi((b_{R_k,S_k}^{\gamma_k})^*b_{r_k,s_k}^{\beta_k}b_{r_1',s_1'}^{\beta_1'}) \prod_{t=2}^{k'}\psi((b_{R_t',S_t'}^{\gamma_t'})^*b_{r_t',s_t'}^{\beta_t'})=\\
&\sum_{\beta,\beta'}\sum_{r,s=1}^{n_\beta}\sum_{r',s'=1}^{n_{\beta'}}\psi((b_{r_1,s_1}^{\beta_1}\cdots  b_{r_k,s_k}^{\beta_k})^*(b_{i_1,j_1}^{\alpha_1}\cdots b_{i_l,j_l}^{\alpha_l})) \psi((b_{r_1',s_1'}^{\beta_1'}\cdots  b_{r_{k'}',s_{k'}'}^{\beta_{k'}'})^*(b_{i_1',j_1'}^{\alpha_1'}\cdots  b_{i_{l'}',j_{l'}'}^{\alpha_l}))\\
&\hspace{5cm}\prod_{t=1}^{k-1}(\delta_{\gamma_t \beta_t}\delta_{R_t r_t}\delta_{S_t s_t}) \delta_{\beta_k\gamma_k}\delta_{\beta_1'\gamma_k}\delta_{r_k R_k} \delta_{s_k r_1'}\delta_{s_1'S_k} Q_{s_k,\beta_k}^{-\um} \prod_{t=2}^{k'}(\delta_{\gamma_t' \beta_t'}\delta_{R_t' r_t'}\delta_{S_t' s_t'})=\\
&\sum_{s_k=1}^{n_{\gamma_k}}Q_{s_k,\gamma_k}^{-\um}\psi((b_{R_1,S_1}^{\gamma_1} \cdots b_{R_k,s_k}^{\gamma_k})^*(b_{i_1,j_1}^{\alpha_1}\cdots  b_{i_l,j_l}^{\alpha_l})) \psi((b_{s_k,S_k}^{\gamma_k}b_{R_2',S_2'}^{\gamma_2'} \cdots b_{R_{k'}',S_{k'}}^{\gamma_{k'}'})^*(b_{i_1',j_1'}^{\alpha_1'}\cdots  b_{i_{l'}',j_{l'}'}^{\alpha_{l'}'}))=\\
& Q_{j_l,\gamma_k}^{-\um}\psi((b_{R_1,S_1}^{\gamma_1} \cdots b_{R_k,j_l}^{\gamma_k})^*(b_{i_1,j_1}^{\alpha_1}\cdots  b_{i_l,j_l}^{\alpha_l})) \psi((b_{j_l,S_k}^{\gamma_k}b_{R_2',S_2'}^{\gamma_2'} \cdots b_{R_{k'}',S_{k'}'}^{\gamma_{k'}'})^*(b_{i_1',j_1'}^{\alpha_1'}\cdots  b_{i_{l'}',j_{l'}'}^{\alpha_{l'}'}))=\\
& Q_{j_l,\gamma_k}^{-1}\psi((b_{R_1,S_1}^{\gamma_1} \cdots e_{R_k,j_l}^{\gamma_k})^*(b_{i_1,j_1}^{\alpha_1}\cdots  b_{i_l,j_l}^{\alpha_l})) \psi((b_{j_l,S_k}^{\gamma_k}b_{R_2',S_2'}^{\gamma_2'} \cdots b_{R_{k'}',S_{k'}'}^{\gamma_{k'}'})^*(b_{i_1',j_1'}^{\alpha_1'}\cdots  b_{i_{l'}',j_{l'}'}^{\alpha_{l'}'}))=\\
& \psi((b_{R_1,S_1}^{\gamma_1}\cdots b_{R_k,S_k}^{\gamma_k}b_{R_2',S_2'}^{\gamma_2'} \cdots b_{R_{k'}',S_{k'}'}^{\gamma_{k'}'})^*(b_{i_1,j_1}^{\alpha_1}\cdots  b_{i_l,j_l}^{\alpha_l}b_{i_1',j_1'}^{\alpha_1'}\cdots  b_{i_{l'}',j_{l'}'}^{\alpha_{l'}'}))=\\
& \delta_{qp}^{\alpha\alpha',\gamma\gamma'}(ii'jj',RR'SS')
\end{align*}

\noindent
where the second to last equality is obtained by simplifying $Q_{j_l,\gamma_k}^{-1}$ with the value given by the first $\psi$ and by inserting all the $\delta$ conditions and coefficients of its argument in the argument of the second $\psi$. This is essentially due to the fact that the index $j_l$ is in both arguments.

With similar computations, it is finally possible to prove that the formula still holds in the remaining cases of $\eta$ (trivial case) and $m^*$.
\end{proof}

\begin{remark}
On the one hand, in the classical case of $S_n^+=\G^{aut}(\C^n, \tr)$ (see Proposition 1.9 in \cite{bs09}), the composition formula we have just proved contains the scalar coefficient $n^{-cb(p,q)}$, which in this case disappears because $\psi$ is normalized (while $\tr(1)=n$). On the other hand, we need to introduce the correction factor $\delta^{-cy(p,q)}$ which is necessary because $\psi$ is a $\delta$-form (while $\tr$ is a 1-form).
\end{remark}

\begin{remark}
From Proposition \ref{propmap} it follows that the category $\mathscr{NC}$ of the noncrossing partitions, with objects the elements of $\N$ and morphisms $\Hom(k,l)=span\{T_p|p\in NC(k,l)\}$, is a rigid concrete monoidal C*-category.
Indeed, it is endowed with a canonical fiber functor $\mathscr{NC}\longrightarrow \hilb$ which sends every $n\in Ob(\mathscr{NC})$ to the Hilbert space $B^{\ot n}$ and every object is equal to its conjugate. 
For more details on these notions we refer to \cite{nt13,wor88}.
\end{remark}

In conclusion, by generalizing a result from \cite{bs09}, we can describe a basis of the space of intertwiners of $\aut$ by making use of noncrossing partitions (instead of Temperley-Lieb diagrams as in \cite{ban02}).

\begin{theorem}\label{intertwinersgaut}
Let $B$ be a $n$-dimensional C*-algebra, $n\geq 4$ and consider the compact quantum automorphism group $\aut$ with fundamental representation $u$. Then, for all $k,l\in\N$
$$\Hom(u^{\otimes k},u^{\otimes l})=span\{T_p | p\in NC(k,l)\}$$
Furthermore, the maps associated to distinct noncrossing partitions in $NC(k,l)$ are linearly independent.
\end{theorem}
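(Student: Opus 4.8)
The plan is to establish the two inclusions separately and then treat linear independence as a distinct step. For the inclusion $\mathrm{span}\{T_p : p\in NC(k,l)\}\subseteq \Hom(u^{\otimes k},u^{\otimes l})$ I would argue purely categorically. By Example \ref{exmap} the multiplication $m$ equals $T_p$ for a specific $p$, and likewise $\eta$ and $\id_B$ are of the form $T_p$; by Definition \ref{autb} these maps are intertwiners, and since $u$ is unitary the intertwiner spaces are closed under adjunction, so $m^*,\eta^*$ are intertwiners as well. As $\Hom(u^{\otimes\bullet},u^{\otimes\bullet})$ is stable under tensor product, composition and adjoint, and as — recalled in the proof of Proposition \ref{propmap} — every noncrossing partition is obtained from $m,\eta,\id_B$ by these three operations, Proposition \ref{propmap} shows that each $T_p$ lands in some $\Hom(u^{\otimes k},u^{\otimes l})$. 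The scalar factors $\delta^{-cy}$ in Proposition \ref{propmap}(3) are irrelevant here, since they do not affect membership in a span.

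For the reverse inclusion I would invoke Woronowicz's Tannaka--Krein duality. By the Remark following Proposition \ref{propmap}, the collection $\mathscr{NC}$, with morphism spaces $\mathrm{span}\{T_p\}$ and fiber functor $k\mapsto B^{\otimes k}$, is a concrete rigid monoidal C*-category; duality therefore reconstructs a compact quantum group $\G'$ whose fundamental representation is unitary and satisfies $\Hom_{\G'}(u^{\otimes k},u^{\otimes l})=\mathrm{span}\{T_p:p\in NC(k,l)\}$ \emph{exactly}. Because $m,\eta$ occur among the $T_p$, the quantum group $\G'$ obeys the defining relations of Definition \ref{autb}, so the universal property of $\aut$ yields a surjection $\caut\to C(\G')$. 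Applying $\id\ot(\cdot)$ to the intertwining relation sends every $\aut$-intertwiner to a $\G'$-intertwiner, whence $\Hom(u^{\otimes k},u^{\otimes l})\subseteq \Hom_{\G'}(u^{\otimes k},u^{\otimes l})=\mathrm{span}\{T_p\}$. Combined with the first paragraph this gives equality, and I stress that this part needs no hypothesis on $\dim B$.

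It remains to prove the linear independence, which is where $\dim B\geq 4$ enters and which I expect to be the crux. Using the rigidity of $\mathscr{NC}$ (Frobenius reciprocity, together with $\bar u\cong u$) I would first reduce to $k=0$: rotating the upper legs downwards gives a bijection $NC(k,l)\to NC(0,k+l)$, $p\mapsto\hat{p}$, compatible with the maps up to invertible twists, so it suffices to show that the vectors $T_{\hat{p}}(1)\in B^{\otimes(k+l)}$, $\hat{p}\in NC(0,k+l)$, are linearly independent. I would then examine their Gram matrix $G_{\hat{p},\hat{q}}=\langle T_{\hat{p}}(1),T_{\hat{q}}(1)\rangle$, which by Proposition \ref{propmap}(2)--(3) is a power of $\delta$ times the scalar $T_{\hat{p}^{*}\hat{q}}\in\C$ attached to the closed diagram obtained by stacking $\hat{p}^{*}$ on $\hat{q}$ — an explicit expression in $\delta$ and the eigenvalues $Q_{i,\alpha}$. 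The aim is $\det G\neq 0$ for $\delta\geq 4$. Since any $\delta$-form satisfies $\delta\geq\dim B$ (from $\Tr(Q_\alpha)\Tr(Q_\alpha^{-1})\geq n_\alpha^2$ by Cauchy--Schwarz, summed over $\alpha$, with equality exactly for a trace), the hypothesis $\dim B\geq 4$ forces $\delta\geq 4$, which is the regime we need.

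The main obstacle is precisely this determinant in the non-tracial case. For $\psi=\tr$ the Gram matrix becomes the classical one with entries a power of $\delta$ controlled by the common refinement $\hat{p}\vee\hat{q}$, and its nonvanishing for $\delta\geq 4$ is the determinant identity of \cite{bs09}. For a general $\delta$-form the entries are deformed by the factors $Q_{i,\alpha}$ and the clean Catalan/M\"obius factorisation is no longer literal. I would handle this by ordering $NC(0,k+l)$ by refinement and isolating, in each entry, its dominant $\delta$-power: the diagonal carries the strictly highest power (with coefficient a nonzero product of the $Q_{i,\alpha}$), so $\delta^{-\max}G$ tends to an invertible diagonal matrix as $\delta\to\infty$, giving generic non-vanishing; the sharp threshold $\delta\geq 4$ then follows by matching $\det G$, up to a nonzero $Q$-factor, with the polynomial in $\delta$ computed in \cite{bs09}. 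Once independence is secured, $\dim\Hom(u^{\otimes k},u^{\otimes l})=|NC(k,l)|$, completing the proof; as a consistency check this recovers the dimensions forced by the $SO(3)$ fusion rules of \cite{ban02}.
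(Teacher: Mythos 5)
Your two inclusions follow the paper's route almost exactly: the $\supseteq$ part by decomposing partitions into $m,\eta,\id_B$ and using Proposition \ref{propmap}, and the $\subseteq$ part by Tannaka--Krein reconstruction of a quantum group $\G'$ with $\Hom_{\G'}=\mathrm{span}\{T_p\}$ and an identification with $\aut$ via the two universal properties (the paper builds the surjection $C(\G')\to\caut$ from Tannaka--Krein universality and then inverts it; you build $\caut\to C(\G')$ directly from the universality of $\aut$ and push intertwiners forward --- both are fine). Where you genuinely diverge is the linear independence: the paper disposes of it by a dimension count, quoting that $\dim\Hom(u^{\ot k},u^{\ot l})$ for a $\delta$-form with $\dim B\ge 4$ was computed in \cite{ban99,ban02} and equals $\#NC(k,l)$, whereas you attempt a self-contained Gram determinant argument. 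That is a legitimate alternative in principle, but as written it has a gap.

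The gap is in your treatment of the Gram matrix. First, the entries are \emph{not} deformed by the eigenvalues $Q_{i,\alpha}$: by your own reduction, $\langle T_{\hat{p}}(1),T_{\hat{q}}(1)\rangle=\langle 1,T_{\hat{p}^*}T_{\hat{q}}(1)\rangle$, and Proposition \ref{propmap}(3) gives $T_{\hat{p}^*}T_{\hat{q}}=\delta^{cy(\hat{q},\hat{p}^*)}T_{\hat{p}^*\hat{q}}$ where $\hat{p}^*\hat{q}$ is the \emph{empty} partition, whose associated scalar is $1$. Hence $G_{\hat{p},\hat{q}}=\delta^{cy(\hat{q},\hat{p}^*)}=\delta^{\,(k+l)+b(p\vee q)-b(p)-b(q)}$ exactly; all $Q$-dependence cancels, and $G$ is conjugate by the invertible diagonal matrix $\mathrm{diag}(\delta^{(k+l)/2-b(p)})$ to the classical matrix $(\delta^{b(p\vee q)})_{p,q}$, so the determinant formula of \cite{bs09} applies verbatim with $n$ replaced by $\delta$ and gives $\det G\neq 0$ for $\delta\ge 4$ (and $\delta\ge\dim B\ge 4$ as you correctly note). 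Second, the repair you actually propose --- extract the dominant $\delta$-power to get invertibility ``as $\delta\to\infty$'', then ``match'' $\det G$ with the polynomial of \cite{bs09} up to a nonzero $Q$-factor --- does not prove the statement: the asymptotic argument only yields non-vanishing for sufficiently large $\delta$, not at a fixed $\delta\ge 4$, and the asserted matching is precisely the identity that would need proof. So either carry out the exact computation above (which makes the second step unnecessary), or fall back on the paper's dimension count via \cite{ban99,ban02}.
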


\begin{proof}
For the first inclusion ($\supseteq$) it is enough to observe that all noncrossing partitions can be obtained from the basic ones (diagrams of multiplication, unity and identity) by using the operations of Definition \ref{opdiag} (this is true because the theorem has already been proved for $(B,\psi)=(\C^n,\tr)$). The inclusion follows because the maps associated to these basic diagrams are intertwiners.\\
For the second inclusion ($\subseteq$) we apply the Tannaka-Krein duality (see \cite{wor88}) to the concrete rigid monoidal C*-category $\mathscr{NC}$. This implies that there exists a compact quantum group $\G=(C(\G),\Delta)$ with fundamental representation $v$ and such that $\Hom(v^{\ot k},v^{\ot l})=span\{T_p | p\in NC(k,l)\}$. Because of the universality of the Tannaka-Krein construction, from the inclusion already proved it follows that there is a surjective map $\phi:C(\G)\longrightarrow \aut$ such that $(id_B\ot\phi)(v)=u$. In order to complete the proof we have to show that the map is an isomorphism. This follows from the universality of the quantum automorphism group construction after observing that the matrix $v$ is unitary and verifying the two conditions $m\in\Hom(v^{\ot 2},v)$ and $\eta\in\Hom(1,v)$ because $m$ and $\eta$ correspond to two noncrossing partitions.\\
The independence of the maps follows from a dimension count, as observed in \cite{bs09}, because the dimensions of the intertwining spaces computed in \cite{ban99} are still true in this case (see also \cite{ban02}).
\end{proof}

We recall now a proposition attributed to Brannan in \cite{dcfy} in order to generalize the representation theory to the case of a state $\psi$.

\begin{prop}\label{freprod}
Let $(B,\psi)$ be a finite-dimensional C*-algebra equipped with a state. 
Let $B=\bigoplus_{i=1}^k B_i$ be the coarsest direct sum decomposition into C*-algebras such that, for each $i$, the normalization $\psi_i$ of $\psi_{|_{B_i}}$ is a $\delta_i$-form for a suitable $\delta_i$. Then, $\aut$ is isomorphic to the free product $\hat{\ast}_{i=1}^k \G^{aut}(B_i,\psi_i)$.
\end{prop}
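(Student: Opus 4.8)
The plan is to realize the isomorphism through the two universal properties at play --- that of the quantum automorphism group (Definition \ref{autb}) and that of the free product of C*-algebras --- after first proving that $\aut$ is forced to respect the decomposition $B=\bigoplus_i B_i$. Concretely, write $p_i:=1_{B_i}$ for the central projection of the summand $B_i$; the crucial preliminary claim is that each $p_i$ lies in $\Hom(u,u)$, where $u$ is the fundamental representation.

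To see this, note that $m\in\Hom(u^{\ot 2},u)$ forces $m^*\in\Hom(u,u^{\ot 2})$, hence $mm^*\in\Hom(u,u)$. A direct computation (entirely analogous to the one in Example \ref{exmap}) shows that $mm^*$ is block-scalar: on the summand $M_{n_\alpha}(\C)$ it acts as multiplication by $\Tr(Q_\alpha^{-1})$, so that $mm^*=\bigoplus_\alpha \Tr(Q_\alpha^{-1})\,\id_{M_{n_\alpha}(\C)}$. Therefore the distinct eigenvalues of $mm^*$ are exactly the distinct values of $\Tr(Q_\alpha^{-1})$, and two matrix blocks share an eigenspace iff the associated $\Tr(Q_\alpha^{-1})$ coincide, iff (after the rescaling $Q_\alpha\mapsto Q_\alpha/\psi(1_{B_i})$) they carry a common $\delta_i$-form. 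Thus the spectral projections of $mm^*$ are precisely the projections $p_i$ of the coarsest decomposition in the statement, and being spectral projections of an element of $\Hom(u,u)$, each $p_i$ belongs to $\Hom(u,u)$.

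Granting this, $u$ splits as $u=\bigoplus_i u^{(i)}$ with $u^{(i)}=(p_i\ot 1)\,u\,(p_i\ot 1)$ a unitary representation on $B_i$. I would then check that $u^{(i)}$ satisfies the three relations defining $\G^{aut}(B_i,\psi_i)$: unitarity is inherited, while $m_i:=m|_{B_i\ot B_i}\in\Hom(u^{(i)\ot 2},u^{(i)})$ and $\eta_i:=p_i\eta\in\Hom(1,u^{(i)})$ follow from $m,\eta$ being intertwiners, from $p_i\in\Hom(u,u)$, and from the vanishing of $m$ on the off-diagonal summands $B_i\ot B_j$, $i\neq j$. One uses here that the inner product induced on $B_i$ by $\psi$ differs from the one induced by the normalization $\psi_i$ only by the positive scalar $\psi(1_{B_i})$, so neither unitarity nor the intertwiner relations are affected. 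The universal property of $C(\G^{aut}(B_i,\psi_i))$ then provides $*$-homomorphisms $\theta_i:C(\G^{aut}(B_i,\psi_i))\to\caut$ carrying fundamental coefficients to those of $u^{(i)}$, and the universal property of the free product assembles them into a single morphism $\Theta:C\big(\hat{\ast}_{i=1}^k\G^{aut}(B_i,\psi_i)\big)\to\caut$, which is surjective since the coefficients of all the $u^{(i)}$ are exactly the coefficients of $u$. Conversely, inside the free product the representations $u^{(i)}$ assemble into $u:=\bigoplus_i u^{(i)}$ on $B$, which is unitary and satisfies $m\in\Hom(u^{\ot 2},u)$ (the off-diagonal summands being killed by $m$) and $\eta\in\Hom(1,u)$; the universal property of $\caut$ then yields the reverse surjection $\Phi:\caut\to C\big(\hat{\ast}_{i=1}^k\G^{aut}(B_i,\psi_i)\big)$.

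It remains to verify that $\Phi$ and $\Theta$ are mutually inverse, which is done by evaluating on the generating fundamental coefficients ($\Theta\Phi(u)=u$ and $\Phi\Theta(u^{(i)})=u^{(i)}$, using $p_i\in\Hom(u,u)$), and that both intertwine the comultiplications; the latter is automatic, since each map sends a fundamental representation to a direct sum of fundamental representations and $\Delta$ is determined on such coefficients by $\Delta(u_{kl})=\sum_m u_{km}\ot u_{ml}$. I expect the only genuine obstacle to be the preliminary step: identifying $mm^*$ explicitly and recognizing that its spectral projections reproduce precisely the coarsest $\delta_i$-form decomposition of $(B,\psi)$. Everything afterwards is a formal manipulation of the two universal properties, the sole point of care being the harmless rescaling of the inner products by the scalars $\psi(1_{B_i})$.
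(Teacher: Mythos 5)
Your argument is correct: the paper itself only states this proposition with a citation to \cite{dcfy} and gives no proof, but your strategy --- computing that $mm^*\in\Hom(u,u)$ acts as $\Tr(Q_\alpha^{-1})$ on each matrix block, so that its spectral projections $p_i$ force $u$ to be block-diagonal along the coarsest $\delta_i$-form decomposition, and then playing the two universal properties against each other in both directions --- is precisely the one the paper employs for the analogous free wreath product statement, Proposition \ref{freeprodcqg}. Nothing further is needed.
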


The representation theory of a free product has been described by Wang in \cite{wan95} and it is completely determined by the representation theory of the factors.

\section{Intertwiners and fusion rules of $\cwpr$}

In this section, we introduce the free wreath product of a discrete group by a quantum automorphism group and we consider its representation theory.
 
\begin{deff}\label{wrpr}
Let $\Gamma$ be a discrete group and consider the quantum automorphism group $\aut$ where $\psi$ is a faithful state on $B$. The free wreath product $\ccwpr$ is the universal unital C*-algebra generated by the coefficients of $a(g)\in\mathcal{L}(B)\otimes \ccwpr$, $g\in\Gamma$ and with relations such that:
\begin{itemize}
\item $a(g)$ is unitary for every $g\in\Gamma$,
\item $m\in \Hom(a(g)\otimes a(h),a(gh))$ for every $g,h\in\Gamma$,
\item $\eta\in \Hom(1,a(e))$.
\end{itemize}
\end{deff}

Such a universal C*-algebra can be endowed with a compact quantum group structure, but as far as this construction is concerned, we need to go deeper into the generators $a(g)$.

\begin{notaz}\label{prodmat}
Consider the matrices $a=(a_{ij,\alpha}^{kl,\beta})$ and $b=(b_{ij,\alpha}^{kl,\beta})$ with coefficients in a C*-algebra where $1\leq \alpha$, $\beta\leq c$, $1\leq i,j\leq n_\alpha$, $1\leq k,l\leq n_\beta$.
The operations of multiplication and adjoint are defined respectively by: $$(ab)^{kl,\beta}_{ij,\alpha}=\sum_{\gamma=1}^c\sum_{r,s=1}^{n_\gamma}a^{kl,\beta}_{rs,\gamma}b^{rs,\gamma}_{ij,\alpha}\qquad\qquad a^*=((a_{kl,\beta}^{ij,\alpha})^*)_{ij,\alpha}^{kl,\beta}$$
\end{notaz}

As in the case of $\caut$, the definition of the universal C*-algebra $\ccwpr$ does not depend on the choice of an orthonormal basis of $B$.
When it will be necessary to fix a basis of $B$, we will always use $\mathscr{B}'$, as this will allow us to consider as diagonal the matrices $Q_\alpha$ associated to the state $\psi$.

\begin{remark}\label{relwpr}
Let us fix $\mathscr{B}'$ as basis of the C*-algebra $B$. Then, the generators of the C*-algebra $\ccwpr$ can be seen as matrices of type $a(g)=(a_{ij,\alpha}^{kl,\beta}(g))$, $1\leq \alpha,\beta\leq c$, $1\leq i,j\leq n_\alpha$, $1\leq k,l\leq n_\beta$, $g\in\Gamma$.
By using the conventions introduced in Notation \ref{prodmat}, we can change the three conditions of Definition \ref{wrpr} into the following relations:
$$\sum_{l=1}^{n_\gamma}Q_{l,\gamma}^{-\um}a_{ik,\alpha}^{rl,\gamma}(g)a_{pj,\beta}^{ls,\gamma}(h)=\delta_{\alpha\beta}\delta_{kp}Q_{k,\alpha}^{-\um}a_{ij,\alpha}^{rs,\gamma}(gh)$$
$$\sum_{k=1}^{n_\alpha}Q_{k,\alpha}^{-\um}a_{ik,\alpha}^{rp,\beta}(g)a_{kj,\alpha}^{qs,\gamma}(h)=\delta_{\beta\gamma}\delta_{pq}Q_{p,\beta}^{-\um}a_{ij,\alpha}^{rs,\beta}(gh)
$$
$$\sum_{\alpha=1}^{c}\sum_{j=1}^{n_\alpha}Q_{j,\alpha}^{\um}a_{jj,\alpha}^{kl,\beta}(e)=\delta_{kl}Q_{l,\beta}^{\um}
\qquad \qquad 
\sum_{\beta=1}^{c}\sum_{k=1}^{n_\beta}Q_{k,\beta}^{\um}a_{ij,\alpha}^{kk,\beta}(e)=\delta_{ij}Q_{i,\alpha}^{\um}
$$

\begin{equation}\label{adj}
(a_{ij,\alpha}^{kl,\beta}(g))^*=(\frac{Q_{l,\beta}}{Q_{j,\alpha}})^{\um}(\frac{Q_{k,\beta}}{Q_{i,\alpha}})^{-\um}a_{ji,\alpha}^{lk,\beta}(g^{-1})
\end{equation}
\end{remark}

\begin{prop} 
There exists a unique $\ast$-homomorphism
$$\Delta:\ccwpr\longrightarrow\ccwpr\otimes\ccwpr$$ such that, for any $g\in\Gamma$ $$(\id\ot\Delta)(a(g))=a(g)_{(12)}a(g)_{(13)}$$
Moreover, $\Delta$ is a comultiplication and the pair $(\ccwpr,\Delta)$ is a compact quantum group which is called the free wreath product of $\widehat{\Gamma}$ by $\aut$ and will be denoted $\cwpr$ or $\wpr$.
\end{prop}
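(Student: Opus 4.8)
The plan is to construct $\Delta$ from the universal property of $\ccwpr$ and then to verify that the family $\{a(g)\}_{g\in\Gamma}$ turns $(\ccwpr,\Delta)$ into a Woronowicz compact quantum group. For each $g\in\Gamma$ I would consider the matrix $\tilde a(g):=a(g)_{(12)}\,a(g)_{(13)}$ with coefficients in $\ccwpr\ot\ccwpr$; explicitly its entries are $\sum_{\gamma,r,s}a_{rs,\gamma}^{kl,\beta}(g)\ot a_{ij,\alpha}^{rs,\gamma}(g)$. To obtain a $\ast$-homomorphism $\Delta$ sending $a_{ij,\alpha}^{kl,\beta}(g)$ to this entry, it is enough, by the universality in Definition \ref{wrpr}, to check that the matrices $\tilde a(g)$ satisfy the three defining relations of the free wreath product, now with target algebra $\ccwpr\ot\ccwpr$. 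Uniqueness of $\Delta$ is then immediate, since the coefficients of the $a(g)$ generate $\ccwpr$ as a C*-algebra and a $\ast$-homomorphism is determined by its values on generators.

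The core of this first step is checking the relations for $\tilde a$. Unitarity is free: $a(g)_{(12)}$ and $a(g)_{(13)}$ are unitaries and a product of unitaries is unitary. The content lies in the two intertwiner conditions. For $m\in\Hom(\tilde a(g)\ot\tilde a(h),\tilde a(gh))$ I would place the two copies of $B$ in legs $1,2$ and the two copies of $\ccwpr$ in legs $3,4$, expand the tensor product representation into $a(g)_{(13)}a(g)_{(14)}a(h)_{(23)}a(h)_{(24)}$, use that operators on disjoint legs commute to regroup it as $\big(a(g)_{(13)}a(h)_{(23)}\big)\big(a(g)_{(14)}a(h)_{(24)}\big)$, and then apply the relation $m\in\Hom(a(g)\ot a(h),a(gh))$ separately in legs $3$ and $4$. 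The relation $\eta\in\Hom(1,\tilde a(e))$ follows in the same manner from $\eta\in\Hom(1,a(e))$. This leg-by-leg bookkeeping, rather than any conceptual difficulty, is where essentially all the work sits and is the main obstacle of the proof.

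Once $\Delta$ exists, coassociativity $(\Delta\ot\id)\Delta=(\id\ot\Delta)\Delta$ need only be checked on the generators, where applying each side to $a(g)$ produces the triple product $a(g)_{(12)}a(g)_{(13)}a(g)_{(14)}$ read in two ways; these agree because the product in $\ccwpr$ is associative, and density of the coefficient $\ast$-algebra propagates the identity to all of $\ccwpr$.

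Finally, to conclude that $(\ccwpr,\Delta)$ is a compact quantum group I would exhibit $\{a(g)\}_{g\in\Gamma}$ as the required family of finite-dimensional unitaries. The density condition holds by construction of the universal C*-algebra; the corepresentation identity $(\id\ot\Delta)(a(g))=a(g)_{(12)}a(g)_{(13)}$ is exactly what was arranged for $\Delta$; and the invertibility of the transpose $a(g)^t$ follows from the adjoint relation (\ref{adj}), which expresses each $(a_{ij,\alpha}^{kl,\beta}(g))^*$ through $a_{ji,\alpha}^{lk,\beta}(g^{-1})$ and positive scalars built from the $Q_{\cdot,\cdot}$. Thus the conjugate of $a(g)$ is, up to conjugation by the invertible positive matrices $Q_\alpha$, the unitary $a(g^{-1})$, hence invertible, which gives the last Woronowicz axiom and completes the proof.
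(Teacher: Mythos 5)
Your proposal is correct and follows essentially the same route as the paper: $\Delta$ is obtained from the universal property by checking that the matrices $a(g)_{(12)}a(g)_{(13)}$ satisfy the three defining relations (with the identical leg-regrouping computation for the multiplication intertwiner), and the Woronowicz axioms are then verified for the family $\{a(g)\}_{g\in\Gamma}$. The only difference is cosmetic and concerns the last step: the paper exhibits an explicit inverse $b(g)$ of $(a(g))^t$ and checks $b(g)(a(g))^t=\mathrm{Id}$ directly from the relations of Remark \ref{relwpr}, whereas you deduce invertibility from (\ref{adj}) by writing the entrywise conjugate $\overline{a(g)}$ as an invertible rescaling and reindexing of the unitary $a(g^{-1})$ --- these amount to the same argument, since the paper's $b(g)$ is exactly the conjugate transpose of that rescaled matrix.
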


\begin{proof}
In order to prove the existence of $\Delta$, we have to check that the images of the generators $a(g)$ satisfy the same relations. It is clear that $a(g)_{(12)}a(g)_{(13)}$ is unitary. When considering the condition on the multiplication, we have\\ 
$\begin{array}{lll}
(m\ot 1^{\ot 2})(a(g)_{(12)}a(g)_{(13)}\ot a(h)_{(12)}a(h)_{(23)})&=& 
(m\ot 1^{\ot 2})(a(g)_{(13)}a(g)_{(14)}a(h)_{(23)}a(h)_{(24)})\\ &=& 
(m\ot 1^{\ot 2})(a(g)_{(13)}a(h)_{(23)})(a(g)_{(14)}a(h)_{(24)})\\ &=&  
(a(gh)_{(12)} a(gh)_{(13)})(m\ot 1^{\ot 2})
\end{array}$\\
The condition on the unity map is simply $a(e)_{(12)}a(e)_{(13)}(\eta\ot 1^{\ot 2})=\eta\ot 1^{\ot 2}$.\\
Therefore, by the universality of the free wreath product construction, the existence of the map $\Delta$ is proved. The uniqueness is an immediate consequence of the fact that the image of all the generators is fixed.\\
Now, we have to verify that the defining properties of a compact quantum group are satisfied. We observe that the matrices $a(g)$ are unitary and, by construction, their entries generate a dense $\ast$-subalgebra of $\ccwpr$. We just proved the existence of a suitable comultiplication $\Delta$. What is left is to prove that the transposed matrices $(a(g))^t$ are invertible. The basis of $B$ which will be used for the computations is $\mathscr{B}'$. 
For every $(a(g))^t$ the inverse is given by $b(g)=((\frac{Q_{l,\beta}}{Q_{j,\alpha}})^{-\um}(\frac{Q_{k,\beta}}{Q_{i,\alpha}})^{\um}a_{ji,\alpha}^{lk,\beta}(g^{-1}))_{ij,\alpha}^{kl,\beta}$. Indeed, we have that
$$\begin{array}{lll}
(b(g)(a(g))^t)_{rs,\gamma}^{kl,\beta}& =&\sum_{\alpha=1}^c\sum_{i,j=1}^{n_\alpha}(\frac{Q_{l,\beta}}{Q_{j,\alpha}})^{-\um}(\frac{Q_{k,\beta}}{Q_{i,\alpha}})^{\um}a_{ji,\alpha}^{lk,\beta}(g^{-1})a(g)_{ij,\alpha}^{rs,\gamma} \\ &=& \delta_{\beta\gamma}\delta_{kr}\sum_{\alpha=1}^c\sum_{j=1}^{n_\alpha}(\frac{Q_{l,\beta}}{Q_{j,\alpha}})^{-\um}a_{jj,\alpha}^{ls,\beta}(e) \\ &=& \delta_{\beta,\gamma}\delta_{kr}\delta_{ls}
\end{array}$$
\noindent
so $b(g)(a(g))^t=\text{Id}$. In the same way it is possible to prove that 
$(a(g))^t b(g)=\text{Id}$. It follows that $a(g)$ is invertible and $\wpr$ is a compact quantum group.
\end{proof}

\noindent
We can now focus on the representation theory of $H_{(B,\psi)}^+(\Gamma)$ when $\psi$ is a $\delta$-form.

\begin{notaz}\label{prodec}
We denote by $NC_{\widehat{\Gamma}}(g_1,\dots ,g_k;h_1,\dots ,h_l)$ the set of diagrams in $NC(k,l)$ where the $k$ upper points are decorated by some $g_i\in\Gamma$ and the $l$ lower points by elements $h_j\in\Gamma$ such that, in every block, the product of the upper elements is equal to the product of the lower elements (with the convention that, if the block connects only upper or only lower points, the product must be the unit of $\Gamma$). For example 

{\centering
 \begin{tikzpicture}[thick,font=\small]
    \path (0,1) node{$\bullet$} node[above](c) {$g_1$}
          (0.5,1) node{$\bullet$} node[above](d) {$g_2$}
		  (1,1) node{$\bullet$} node[above](e) {$g_3$}
          (1.5,1) node{$\bullet$} node[above](f) {$g_4$}
          (1,.3) node{$\bullet$} node[below](h) {$h_1$};
  
    \draw (d) -- +(0,-.5) -| (f);
    \draw (e) -- (h);
 \end{tikzpicture}
\par}
\noindent
is in $NC_{\widehat{\Gamma}}(g_1,g_2,g_3,g_4;h_1)$ if $g_1=e$, $g_2g_3g_4=h_1$.
\end{notaz}
The operations between noncrossing partitions introduced in Definition \ref{opdiag} as well as the results of Proposition \ref{propmap} naturally extend to decorated diagrams.

\begin{prop}
Let $p\in NC_{\widehat{\Gamma}}(g_1,\dots ,g_k;h_1,\dots ,h_l)$, $q\in NC_{\widehat{\Gamma}}(g'_1,\dots ,g'_v;h'_1,\dots ,h'_w)$.
We have:
\begin{enumerate}
\item $T_{p\otimes q}=T_p\otimes T_q$ where $p\otimes q\in NC_{\widehat{\Gamma}}(g_1,\dots ,g_k,g'_1,\dots ,g'_v;h_1,\dots ,h_l,h'_1,\dots ,h'_w)$ is obtained by horizontal concatenation,
\item $T_p^*=T_{p^*}$ where $p^*\in NC_{\widehat{\Gamma}}(h_1,\dots ,h_l;g_1,\dots ,g_k)$ is obtained by reflecting $p$ with respect to an horizontal line which lies between the rows of the upper and of the lower points,
\item if $(h_1,\dots ,h_l)=(g'_1,\dots ,g'_v)$ then $T_{qp}=\delta^{-cy(p,q)}T_q T_p$ where $qp\in NC_{\widehat{\Gamma}}(g_1,\dots ,g_k;h'_1,\dots ,h'_w)$ is obtained by vertical concatenation.
\end{enumerate}
\end{prop}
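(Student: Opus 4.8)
The plan is to reduce the statement to its undecorated counterpart, namely the already-established Proposition \ref{propmap}, by showing that the group decorations do not interfere with the underlying linear-algebraic identities. The key observation is that the maps $T_p$ associated to decorated diagrams are defined by exactly the same coefficient formula $\delta_p^{\alpha,\beta}(ij,rs)$ as in Notation \ref{coeff} and Definition \ref{map2}; the decorations only restrict \emph{which} diagrams are admissible (through the block-product condition of Notation \ref{prodec}) but do not alter the formula computing the linear map once a diagram is fixed. Consequently, items (1) and (2) should follow almost immediately: for the tensor product, horizontal concatenation of decorated diagrams multiplies the coefficients exactly as before, and one only needs to check that the concatenated diagram is again a legal element of $NC_{\widehat{\Gamma}}$, which is automatic since the blocks of $p\otimes q$ are precisely the (disjoint) union of the blocks of $p$ and $q$, so each retains its balancing condition. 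For the adjoint, reflecting $p$ swaps upper and lower points in each block, and since the block-product equality $\prod(\text{upper})=\prod(\text{lower})$ is symmetric under this swap, $p^*$ is admissible and the coefficient identity $\delta_p^{\alpha,\beta}(ij,rs)=\delta_{p^*}^{\beta,\alpha}(rs,ij)$ from Proposition \ref{propmap} carries over verbatim.

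The substantive point is item (3), the composition rule. First I would verify that vertical concatenation of two compatible decorated diagrams yields a well-defined decorated diagram $qp\in NC_{\widehat{\Gamma}}(g_1,\dots,g_k;h'_1,\dots,h'_w)$; this requires checking that every block of the composite still satisfies the balancing condition. The natural argument is to track the group elements through the middle row: when blocks of $p$ and $q$ are glued along the shared points $(h_1,\dots,h_l)=(g'_1,\dots,g'_v)$, the products telescope, and a block of $qp$ that survives (i.e.\ touches a genuine upper point of $p$ or lower point of $q$) inherits an upper product and a lower product that agree because each constituent block of $p$ and of $q$ was already balanced. One must also confirm that the \emph{central} blocks and cycles removed during composition carry trivial net group elements, so that discarding them is consistent with the balancing condition — this is exactly where the decorations could in principle obstruct the argument, and it is the step I expect to be the main obstacle. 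The resolution should be that a central block connects only middle points, and since these are matched between $p$ and $q$ with equal products on both sides, the decoration data of a removed block is automatically consistent and imposes no constraint on the surviving blocks.

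Once the admissibility of $qp$ is settled, the scalar identity $T_{qp}=\delta^{-cy(p,q)}T_q T_p$ itself is inherited directly from Proposition \ref{propmap}: the decorations play no role in the coefficient computation, so the same cycle-counting bookkeeping (equation \eqref{coeffcomp}) applies unchanged, and in particular the exponent $cy(p,q)$ is computed from the undecorated partition structure, which is identical. I would therefore present item (3) by first giving the short combinatorial verification that $qp$ lies in $NC_{\widehat{\Gamma}}$, and then remarking that, because $T_p$ depends only on the coefficients $\delta_p^{\alpha,\beta}(ij,rs)$ and not on the group labels, the composition formula reduces literally to the already-proven undecorated case. The whole proof should thus be brief, amounting to the statement that the group decorations are compatible with the three diagram operations and are transparent to the linear maps $T_p$.
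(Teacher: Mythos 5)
Your proposal is correct and follows essentially the same route as the paper: the paper's proof is a one-line reduction to Proposition \ref{propmap}, noting only that tensor product, adjoint and composition of decorated diagrams always yield admissibly decorated diagrams, which is exactly the content you verify (in somewhat more detail) before observing that the maps $T_p$ are blind to the decorations.
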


\begin{proof}
The proof is essentially the same as of Proposition \ref{propmap}, we have only to observe that the operations between noncrossing partitions are well defined with respect to the decoration of the diagrams, i.e. the operations of tensor product, adjoint and composition always produce diagrams with an admissible decoration.
\end{proof}

\begin{example}
The fundamental maps $m$, $\eta$ and $\id_B$ can be represented by using decorated noncrossing partitions. In particular, for all $g,h\in\Gamma$ the multiplication $m$, the unity $\eta$ and the identity $\id_B$ respectively correspond to the following diagrams.

{\centering
 \begin{tikzpicture}[thick,font=\small]
    \path (0,.8) node{$\bullet$} node[above](a) {$g$}
          (.8,.8) node{$\bullet$} node[above](b) {$h$}
          (.4,0) node{$\bullet$} node[below](c) {$gh$}
          
          (2.8,.8)node{$\emptyset$}
          (2.8,0) node{$\bullet$} node[below](d) {$e$}
          
          (4.8,.8) node{$\bullet$} node[above](e) {$g$}
          (4.8,0) node{$\bullet$} node[below](f) {$g$};

    \draw (c) -- +(0,0.7);
    \draw (a) -- +(0,-0.6) -| (b);
    \draw (e) -- (f);
 \end{tikzpicture}
\par}

\end{example}

\begin{theorem}\label{teointertgamma}
Let $\Gamma$ be a discrete group and $(B,\psi)$ be a finite dimensional C*-algebra with a $\delta$-form $\psi$ and $\dim(B)\geq 4$. The spaces of intertwiners of $\widehat{\Gamma}\wr_*\aut$ are spanned by the linear maps associated to some decorated noncrossing partitions. In particular for any $g_i,h_j\in\Gamma$ we have
$$Hom(\bigotimes_{i=1}^k a(g_i),\bigotimes_{j=1}^l a(h_j))= span\{T_p|p\in NC_{\widehat{\Gamma}}(g_1,\dots,g_k;h_1,\dots,h_l)\}$$
with the convention that, if $k=0$, $\bigotimes_{i=1}^k a(g_i)=1_{\wpr}$ and the space of the noncrossing partitions is $NC_{\widehat{\Gamma}}(\emptyset;h_1,\dots,h_l)$, i.e. it does not have upper points. Similarly, if $l=0$.
\end{theorem}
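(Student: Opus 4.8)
The plan is to follow the same two-step strategy used in the proof of Theorem \ref{intertwinersgaut}, now working inside the decorated category $NC_{\widehat{\Gamma}}$ and invoking the decorated analogue of Proposition \ref{propmap} in place of the undecorated statement. First I establish the inclusion $\supseteq$ by hand, then I obtain the reverse inclusion $\subseteq$ by a Tannaka--Krein reconstruction followed by a double universality argument identifying the reconstructed object with $\cwpr$.

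For the inclusion $\supseteq$ I would first observe that the three fundamental decorated diagrams displayed in the example just above correspond exactly to the conditions governing the free wreath product: the decorated $m$ (upper labels $g,h$, lower label $gh$) and the decorated $\eta$ (lower label $e$) are precisely the defining relations $m\in\Hom(a(g)\ot a(h),a(gh))$ and $\eta\in\Hom(1,a(e))$ of Definition \ref{wrpr}, while $\id_B\in\Hom(a(g),a(g))$ holds trivially. Hence each of the associated maps $T_p$ is an intertwiner. Since every $p\in NC_{\widehat{\Gamma}}(g_1,\dots,g_k;h_1,\dots,h_l)$ is built from these fundamental diagrams by tensor product, composition and adjoint, and since the decorated version of Proposition \ref{propmap} shows both that each operation preserves admissibility of the labelling and that $T_{p\ot q}=T_p\ot T_q$, $T_{p^*}=T_p^*$, $T_{qp}=\delta^{-cy(p,q)}T_qT_p$, the three operations send intertwiners to intertwiners. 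Therefore $\mathrm{span}\{T_p\}\subseteq\Hom(\bigotimes_i a(g_i),\bigotimes_j a(h_j))$.

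For the reverse inclusion $\subseteq$ I would apply Tannaka--Krein duality (\cite{wor88}) to the concrete monoidal C*-category whose objects are finite words $(g_1,\dots,g_k)$ in $\Gamma$, with fiber functor sending such a word to $B^{\ot k}$ and with morphism spaces $\mathrm{span}\{T_p\mid p\in NC_{\widehat{\Gamma}}(g_1,\dots,g_k;h_1,\dots,h_l)\}$. The decorated version of Proposition \ref{propmap} guarantees that this is a monoidal C*-category, while rigidity comes from taking $(g^{-1})$ as a conjugate of $(g)$: the evaluation and coevaluation are the $T_p$ attached to the decorated cap and cup whose single block carries the labels $g,g^{-1}$ (admissible since $gg^{-1}=e$), obtained by composing the decorated $m,\eta$ and their adjoints. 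Tannaka--Krein then yields a compact quantum group $\G'$ with generating representations $a'(g)$, $g\in\Gamma$, whose intertwiner spaces are exactly these spans. It remains to identify $\G'$ with $\cwpr$. By the inclusion $\supseteq$ already proved, the generators $a(g)$ of $\ccwpr$ satisfy all the intertwining relations built into $\G'$, so the universality of the Tannaka--Krein construction produces a surjection $C(\G')\twoheadrightarrow\ccwpr$ sending $a'(g)$ to $a(g)$; conversely, since the decorated $m$ and $\eta$ are morphisms in $\G'$, the unitaries $a'(g)$ satisfy exactly the defining relations of Definition \ref{wrpr}, so the universality of the free wreath product produces a $*$-homomorphism $\ccwpr\to C(\G')$ sending $a(g)$ to $a'(g)$. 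These maps are mutually inverse, whence $\G'\cong\cwpr$ and the claimed equality of intertwiner spaces follows. The degenerate cases $k=0$ or $l=0$ are handled identically, the empty word corresponding to $1_{\wpr}$.

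The step I expect to be the main obstacle is the verification of rigidity, i.e. checking that the cup/cap diagrams labelled by $g$ and $g^{-1}$ genuinely satisfy the conjugate (zig-zag) equations after composition, so that Tannaka--Krein applies. The $Q$-dependent normalization encoded in the adjoint relation (\ref{adj}) is precisely what makes these equations close up, and tracking this normalization through the composition (with its $\delta^{-cy(p,q)}$ factors) is the delicate point. Everything else is a faithful transcription of the undecorated argument for Theorem \ref{intertwinersgaut}, with the combinatorial bookkeeping of the group labels supplied by the extension of Proposition \ref{propmap} to $NC_{\widehat{\Gamma}}$.
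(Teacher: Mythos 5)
Your proposal is correct and follows essentially the same route as the paper: the inclusion $\supseteq$ by decomposing decorated partitions into the fundamental diagrams for $m$, $\eta$, $\id_B$ (which are intertwiners by the defining relations of the free wreath product), and the inclusion $\subseteq$ by Tannaka--Krein reconstruction from the rigid monoidal C*-category $\mathscr{NC}_{\widehat{\Gamma}}$ followed by the two universality arguments yielding mutually inverse morphisms. The only difference is that you spell out the rigidity check (conjugates via $g^{-1}$ and the zig-zag equations), which the paper simply asserts.
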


\begin{proof}
We prove this result by showing the double inclusion. The first inclusion we take into account is the one of the right space in the left one ($\supseteq$).
It is well known that all noncrossing partitions can be built by using the operations of tensor product, composition and adjoint on the noncrossing partitions corresponding to the maps of multiplication, unity and identity. This fact can be easily generalized to the context of the noncrossing partitions decorated with the elements of $\Gamma$. Let $p\in NC_{\widehat{\Gamma}}(g_1,...,g_k;h_1,...,h_l)$ be a decorated noncrossing partition. Its decomposition in terms of 
the decorated noncrossing partitions corresponding to $m$, $\eta$ and $\id$ is simply obtained by considering the usual decomposition in terms of (non decorated) noncrossing partition and by observing that it is always possible to decorate all these partitions in an admissible way.
Now, the linear maps corresponding to the decorated noncrossing partitions of the decomposition are intertwiners of $\wpr$ by definition of free wreath product. It follows that $T_p\in\Hom(\bigotimes_{i=1}^k a(g_i),\bigotimes_{j=1}^l a(h_j))$.\\
For the second inclusion ($\subseteq$), we observe that, similarly to the proof of Theorem \ref{intertwinersgaut}, the noncrossing partitions decorated with the elements of $\Gamma$ form a concrete rigid monoidal C*-category $\mathscr{NC}_{\widehat{\Gamma}}$, whose objects are the finite sequences $(g_1,...,g_k), g_i\in\Gamma$ and whose spaces of morphisms are $\Hom((g_1,...,g_k),(h_1,...,h_l))=span\{T_p | p\in NC_{\widehat{\Gamma}}(g_1,...,g_k;h_1,...,h_l)\}$. Therefore, by the Tannaka-Krein duality, there exists a compact quantum group $\G=(C(\G),\Delta)$, such that $C(\G)$ is generated by the coefficients of a family of finite dimensional unitary representations $a(g_i)'$ and $\Hom(\bigotimes_{i=1}^k a(g_i)',\bigotimes_{j=1}^l a(h_j)')=span\{T_p | p\in NC_{\widehat{\Gamma}}(g_1,...,g_k;h_1,...,h_l)\}$. Moreover, the inclusion showed in the first  part of the proof, together with the universality of the Tannaka-Krein construction, imply that there is a surjective map $\phi:C(\G)\longrightarrow \wpr$ such that $(\id\ot \phi)(a(g)')=a(g)$, for all $g\in\Gamma$. In order to complete the proof we have to show that the map is an isomorphism. We observe that the representations $a(g)'$ are such that $m\in\Hom(a(g)'\ot a(h)',a(gh)')$ and $\eta\in\Hom(1,a(e)')$ because these maps correspond to well decorated noncrossing partitions. Therefore, because of the universality of the free wreath product construction we have the inverse morphism and the proof is complete.
\end{proof}
\noindent
In the same way as in \cite[Cor 2.21]{fr13}, it is then possible to prove that

\begin{prop}
The basic representations $a(g)$, $g\in\Gamma$ of $\wpr$ are irreducible and pairwise non-equivalent if $g\neq e$; the remaining representation is $a(e)=1\oplus \omega(e)$, where $\omega(e)$ is irreducible and non-equivalent to any $a(g), g\neq e$.
\end{prop}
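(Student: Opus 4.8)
The plan is to read off every relevant intertwiner space directly from Theorem \ref{teointertgamma}, which reduces all the assertions to an enumeration of admissible decorated partitions in $NC_{\widehat{\Gamma}}(g;h)$ with a single point on each row. First I would record that $NC(1,1)$ contains exactly two partitions: the \emph{through-block} connecting the upper and the lower point, and the \emph{disconnected} partition consisting of two singletons. The associated maps are immediate from Definition \ref{map2}: the through-block gives $T=\id_B$, while the disconnected diagram gives $T=\eta\eta^*$, the rank-one projection onto $\C\cdot\eta(1)\subseteq B$ (here one uses that $\eta$ is an isometry because $\psi$ is normalized). By Theorem \ref{intertwinersgaut} these two maps are linearly independent.

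Next I would impose the decoration constraint of Notation \ref{prodec} in each case. For the through-block the single block contains one upper and one lower point, so admissibility forces the upper label to equal the lower label; for the disconnected partition each singleton block must carry the unit $e$. Hence in $NC_{\widehat{\Gamma}}(g;g)$ with $g\neq e$ only the through-block survives, so $\Hom(a(g),a(g))=\C\,\id_B$ and $a(g)$ is irreducible; and in $NC_{\widehat{\Gamma}}(g;h)$ with $g\neq h$ neither partition is admissible, so $\Hom(a(g),a(h))=0$ and the representations $a(g)$ are pairwise inequivalent for $g\neq e$.

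For $a(e)$ I would instead observe that both partitions of $NC(1,1)$ admit the all-$e$ decoration, so $NC_{\widehat{\Gamma}}(e;e)$ has exactly two elements and $\dim\Hom(a(e),a(e))=2$. Since $\Hom(a(e),a(e))$ is a finite-dimensional C*-algebra, being two-dimensional forces it to be $\C\oplus\C$; thus $a(e)$ splits as a direct sum of two inequivalent irreducibles. Because $\eta\in\Hom(1,a(e))$ by the very definition of the free wreath product (Definition \ref{wrpr}), the trivial representation $1$ is one of these summands, realized precisely by the projection $\eta\eta^*$ found above; writing $\omega(e)$ for the complementary summand gives $a(e)=1\oplus\omega(e)$ with $\omega(e)$ irreducible. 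Finally, $\Hom(a(g),a(e))=0$ for $g\neq e$, since $NC_{\widehat{\Gamma}}(g;e)$ is empty (both the through-block and the disconnected diagram would require $g=e$), so $\omega(e)\subseteq a(e)$ cannot be equivalent to any $a(g)$ with $g\neq e$.

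The individual computations are routine; the only points demanding genuine care are the passage from $\dim\Hom(a(e),a(e))=2$ to the decomposition $1\oplus\omega(e)$. I expect the main obstacle to be ensuring that the two spanning intertwiners are really linearly independent, so that the endomorphism algebra genuinely has dimension two rather than one: this is where I rely on the linear-independence statement of Theorem \ref{intertwinersgaut}, transported to the decorated setting through the fact that $T_p$ depends only on the underlying undecorated partition, the two admissible diagrams here having distinct underlying partitions. Once this is secured, the C*-algebraic classification of the $2$-dimensional endomorphism algebra as $\C\oplus\C$ delivers exactly two inequivalent irreducible components, and the identification of one of them with the trivial representation via $\eta$ completes the argument.
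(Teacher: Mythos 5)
Your argument is correct and is essentially the proof the paper has in mind: the paper simply defers to the analogue in \cite[Cor.\ 2.21]{fr13}, whose proof is exactly this enumeration of admissible decorations of the two partitions in $NC(1,1)$, giving $\dim\Hom(a(g),a(h))=\delta_{g,h}(1+\delta_{g,e})$ and then splitting off the trivial subrepresentation of $a(e)$ via $\eta$. Your handling of the linear independence (via Theorem \ref{intertwinersgaut}, or just by comparing ranks of $\id_B$ and $\eta\eta^*$) and of the two-dimensional endomorphism algebra is sound, so nothing further is needed.
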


\begin{deff}
Let $\Gamma$ be a discrete group and $M=<\Gamma>$ the monoid of the words over $\Gamma$. It is endowed with the following operations:
\begin{itemize}
\item[-] involution: $(g_1,\dots ,g_k)^-=(g_k^{-1},\dots ,g_1^{-1})$
\item[-] concatenation: $(g_1,\dots ,g_k),(h_1,\dots ,h_l)=(g_1,\dots ,g_k,h_1,\dots ,h_l)$
\item[-] fusion: $(g_1,\dots ,g_k).(h_1,\dots ,h_l)=(g_1,\dots ,g_kh_1,\dots ,h_l)$
\end{itemize}
\end{deff}

We can now state the main theorem, generalizing Theorem 2.25 in \cite{fr13} to the case of $\wpr$. The proof is the same as of \cite{fr13}, because it relies only on the description of the intertwining spaces by using noncrossing partitions.

\begin{theorem}
The irreducible representations of $\wpr$ are indexed by the words of $M$ and denoted $\omega(x), x\in M$ with involution $\bar{\omega}(x)=\omega(\bar{x})$. In particular, for $g\in\Gamma$, we have $\omega(g)=a(g)\ominus \delta_{g,e}1$.

The fusion rules are:
$$\omega(x)\otimes \omega(y)=\sum_{\substack{x=u,t \\ y=\bar{t},v}}\omega(u,v)\oplus \sum_{\substack{x=u,t \\ y=\bar{t},v \\ u\neq\emptyset , v\neq \emptyset}}\omega(u.v)$$
\end{theorem}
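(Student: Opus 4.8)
The plan is to carry over the fusion-ring computation of \cite{fr13} essentially verbatim, the only structural input being the combinatorial model for the intertwiners, which Theorem \ref{teointertgamma} now supplies in the same form as for $\widehat{\Gamma}\wr_* S_n^+$. The first observation I would isolate is that this yields a clean dimension count. Indeed, the underlying linear map $T_p$ depends only on the undecorated partition (Definition \ref{map2}), while the $\Gamma$-decoration is merely the admissibility constraint that fixes membership in a given Hom space; for a fixed source word $(g_1,\dots,g_k)$ and target word $(h_1,\dots,h_l)$ each underlying partition carries a unique induced decoration, so distinct admissible partitions give distinct, hence linearly independent, maps by Theorem \ref{intertwinersgaut} (this is where $\dim B\geq 4$ enters). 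Therefore
$$\dim\Hom\Big(\bigotimes_{i=1}^k a(g_i), \bigotimes_{j=1}^l a(h_j)\Big) = \#\,NC_{\widehat{\Gamma}}(g_1,\dots,g_k; h_1,\dots,h_l),$$
and the whole representation theory is reduced to counting decorated noncrossing partitions.

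Next I would define the $\omega(x)$ recursively through the one-letter fusion rule. Starting from the previous proposition on basic representations ($a(g)$ irreducible for $g\neq e$ and $a(e)=1\oplus\omega(e)$), set $\omega(\emptyset)=1$, $\omega(g)=a(g)\ominus\delta_{g,e}1$, and for a word $x$ and $g\in\Gamma$ introduce $\omega(x,g)$ and $\omega(x.g)$ as the genuinely new summands of $\omega(x)\otimes a(g)$. Unwinding the claimed fusion rule in the case $y=(g)$ gives exactly
$$\omega(x)\otimes a(g) = \omega(x,g) \oplus [x\neq\emptyset]\,\omega(x.g) \oplus [x=(u,g^{-1})]\,\omega(u),$$
which will be the engine of the induction.

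The core is then a single induction on $|x|+|y|$ establishing simultaneously that (i) each $\omega(x)$ is irreducible, (ii) $\omega(x)\cong\omega(y)$ iff $x=y$, and (iii) the full fusion rule holds. The inductive step rewrites $\dim\Hom(\omega(x)\otimes\omega(y),\omega(z))$ via Frobenius reciprocity and $\overline{a(g)}=a(g^{-1})$ as a count of admissible partitions linking the concatenated upper word $(g_1,\dots,g_k,h_1,\dots,h_l)$ to $z$, and matches it against the decompositions $x=u,t$, $y=\bar t,v$: the subword $t$ is the maximal tail of $x$ cancelled against the head of $y$ through nested cups, where each cup pairs a letter $a$ of $x$ with the letter $a^{-1}$ of $y$, so that its block product is trivial and the balance condition is precisely $y=\bar t,v$; the surviving letters furnish $u$ and $v$, and whether the last surviving letter of $u$ and the first surviving letter of $v$ are forced into a common block (giving the fusion term $\omega(u.v)$, subject to $u,v\neq\emptyset$) or kept separate (giving the concatenation term $\omega(u,v)$) accounts for the two sums. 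Completeness of $\{\omega(x)\}_{x\in M}$ follows since the $a(g)$ generate $C(\wpr)$ and every tensor word decomposes into $\omega(x)$'s by the recursion, and the involution $\bar\omega(x)=\omega(\bar x)$ follows from $\overline{a(g)}=a(g^{-1})$ together with word reversal $\bar x=(g_k^{-1},\dots,g_1^{-1})$.

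The main obstacle is the combinatorial matching in step (iii): one must prove that the decoration-balanced noncrossing partitions between $(g_1,\dots,g_k,h_1,\dots,h_l)$ and $z$ are in bijection with the fusion-rule data, separating the concatenation and fusion contributions correctly and tracking the $\Gamma$-labels, which is exactly where the group multiplication is encoded into the through-blocks. Since Theorem \ref{teointertgamma} furnishes the identical combinatorial model used in \cite{fr13}, I expect this bijection to go through by transcribing Lemeux's argument, with the $\Gamma$-admissibility of blocks playing the role it plays there.
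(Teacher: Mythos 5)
Your proposal is correct and follows essentially the same route as the paper: the paper's proof of this theorem consists precisely of the observation that, once Theorem \ref{teointertgamma} identifies the intertwiner spaces with spans of $\Gamma$-decorated noncrossing partitions, the argument of Lemeux in \cite{fr13} (dimension count via admissible decorated partitions, recursive construction of the $\omega(x)$, and induction on word length for irreducibility, non-equivalence and the fusion rules) carries over verbatim. Your write-up merely makes explicit the steps that the paper delegates to \cite{fr13}.
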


\noindent
In order to extend these results to the case of a state $\psi$, we use this proposition.

\begin{prop}\label{freeprodcqg}
Let $B=\bigoplus_{\alpha=1}^c M_{n_\alpha}(\C)$ be a finite-dimensional C*-algebra with a state $\psi=\bigoplus_{\alpha=1}^c \Tr(Q_\alpha\cdot)$ on it.
The state $\psi$ restricted to every summand $M_{n_\alpha}(\C)$ (and normalized) is a $\delta$-form with $\delta=\Tr(Q_\alpha^{-1})$. 
Consider the decomposition $B=\bigoplus_{i=1}^{d}B_{i}$ where every $B_i$ is the direct sum of all the $M_{n_\alpha}(\C)$ such that $\Tr(Q_\alpha^{-1})$ is a constant value denoted $\delta_i$. Let $\psi_i$ be the state on $B_i$ which is obtained by normalizing $\psi_{|_{B_i}}$. Then
$$\cwpr\cong \hat{\ast}_{i=1}^d (\widehat{\Gamma}\wr_{*}\G^{aut}(B_i,\psi_i))$$
is a $*$-isomorphism which intertwines the comultiplications.
\end{prop}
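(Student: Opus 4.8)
The plan is to imitate the strategy of Proposition~\ref{freprod}: rather than constructing the isomorphism by hand, I would identify both sides as Tannaka--Krein duals of the same concrete rigid monoidal C*-category and invoke uniqueness. Concretely, having already established in Theorem~\ref{teointertgamma} that $\cwpr$ is the compact quantum group reconstructed from the category $\mathscr{NC}_{\widehat{\Gamma}}$ whose morphism spaces are spanned by the maps $T_p$ associated to $\Gamma$-decorated noncrossing partitions on the Hilbert space $B^{\otimes n}$, the whole problem reduces to showing that the intertwiner category of the free product $\hat{\ast}_{i=1}^d (\widehat{\Gamma}\wr_{*}\G^{aut}(B_i,\psi_i))$ is \emph{the same} category, up to the canonical identification $B^{\otimes n}\cong\bigoplus(B_{i_1}\otimes\cdots\otimes B_{i_n})$.

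First I would fix the decomposition $B=\bigoplus_{i=1}^d B_i$, observe (using Remark~\ref{df}) that each normalized restriction $\psi_i$ is a $\delta_i$-form, and note that by construction the values $\delta_i$ are pairwise distinct across the summands; this pairwise-distinctness is exactly what will force the \emph{noncrossing} partitions to be block-diagonal with respect to the $B_i$, and is the analogue of Brannan's coarsest-decomposition hypothesis in Proposition~\ref{freprod}. The key computational step is to analyze, for a fixed decorated noncrossing partition $p$, the coefficient $\delta_p^{\alpha,\beta}(ij,rs)$ from Notation~\ref{coeff}: each factor $\psi((b_v^\downarrow)^*b_v^\uparrow)$ attached to a block $b_v$ vanishes unless \emph{all} matrix units in that block lie in a common summand $M_{n_\alpha}(\C)$, because $\psi$ is a direct sum and the product of units from different summands is $0$. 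Hence $T_p$ is supported on the subspace where every block is monochromatic in the $B_i$-grading, and $T_p$ decomposes as a direct sum of the analogous maps for the factors. This is the step I expect to be the main obstacle, since one must check that under the vertical composition of Proposition~\ref{propmap}(3) the correction factor $\delta^{-cy(p,q)}$ matches across the two descriptions precisely because within each monochromatic block the relevant $\delta$ is the \emph{same} $\delta_i$ — so the scalar bookkeeping for the free product, where each factor carries its own $\delta_i$, agrees with the single ambient category only after the block-diagonal splitting has been made. Here the role of the coarsest decomposition is essential: if two summands shared the same $\delta$, a noncrossing partition could connect them with a consistent scalar and the free-product splitting would fail.

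Having shown that $\mathscr{NC}_{\widehat{\Gamma}}$ for $(B,\psi)$ splits as the free-product category $\hat{\ast}_i \mathscr{NC}_{\widehat{\Gamma}}^{(i)}$ of the categories attached to the $(B_i,\psi_i)$ — that is, its morphism spaces are spanned exactly by the maps that factor blockwise through a single colour, which by Wang's description of representations of free products (\cite{wan95}) is precisely the intertwiner category of $\hat{\ast}_{i=1}^d (\widehat{\Gamma}\wr_{*}\G^{aut}(B_i,\psi_i))$ — I would conclude by Tannaka--Krein reconstruction (\cite{wor88}) that the two compact quantum groups coincide. Finally, the resulting $*$-isomorphism intertwines the comultiplications automatically, since on both sides $\Delta$ is the unique comultiplication determined on the generators by $(\id\otimes\Delta)(a(g))=a(g)_{(12)}a(g)_{(13)}$, and the identification sends generators to the corresponding generators of the factors, which is compatible with the free-product comultiplication by construction.
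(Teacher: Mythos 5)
Your starting point is not available here: Theorem \ref{teointertgamma} describes the intertwiners of $\cwpr$ only under the hypothesis that $\psi$ is a $\delta$-form, whereas the whole purpose of Proposition \ref{freeprodcqg} is to treat a general state $\psi$ by reducing it to the $\delta$-form case. For a general $\psi$ the spaces $\Hom(\bigotimes_i a(g_i),\bigotimes_j a(h_j))$ are strictly larger than $span\{T_p\}$: already $\Hom(a(e),a(e))$ contains the spectral projections of the intertwiner $mm^*$ (the projections onto the summands $B_i$, which exist as intertwiners precisely because the $\delta_i$ are pairwise distinct), and these do not lie in the two-dimensional space $span\{T_p \mid p\in NC_{\widehat{\Gamma}}(e;e)\}$ once $d\geq 2$. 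So you cannot quote the theorem to identify $\cwpr$ with the Tannaka--Krein dual of $\mathscr{NC}_{\widehat{\Gamma}}$; determining the intertwiner category of $\cwpr$ for a general state is essentially the content of the proposition you are trying to prove.

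Relatedly, the claim that the (uncoloured) category $\mathscr{NC}_{\widehat{\Gamma}}$ "splits as" the free-product category fails on dimension grounds. Your observation that each block coefficient $\psi((b_v^\downarrow)^*b_v^\uparrow)$ vanishes unless the block is monochromatic is correct, but it only shows that $T_p=\sum_c T_{p,c}$ is the \emph{sum} over all colourings $c$ of the blocks by $\{1,\dots,d\}$; the intertwiners of the free product are spanned by the individual coloured maps $T_{p,c}$, of which there are strictly more (for $d=2$ one finds $\dim\Hom(1,a(e)^{\ot 2})=6$ for the free product, versus $\#NC(0,2)=2$). Repairing this would require introducing a genuinely coloured partition category, proving a reconstruction theorem for it, and deriving from \cite{wan95} that the free product's intertwiners are exactly those coloured maps --- none of which is in your sketch. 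The paper's proof sidesteps all of this with a generators-and-relations argument: since $mm^*\in\Hom(a(g),a(g))$ acts as $\delta_i$ on $B_i$ with the $\delta_i$ pairwise distinct, the coefficients $a_{ij,\alpha}^{kl,\beta}(g)$ vanish whenever $\alpha$ and $\beta$ index matrix blocks in different summands $B_i$; the defining relations of $\ccwpr$ then collapse to those of the free product of the $C(\widehat{\Gamma}\wr_{*}\G^{aut}(B_i,\psi_i))$, and universality applied in both directions yields mutually inverse morphisms intertwining the comultiplications. That is the key idea missing from your proposal.
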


\begin{proof}
For this proof we will use the relations introduced in Remark \ref{relwpr} between the generators of the free wreath product $\ccwpr$. Firstly, we notice that the map $mm^*\in \Hom(a(g),a(g))$ is such that $mm^*_{|_{B_i}}=\delta_i \id_{B_i}$. The existence of this particular morphism implies that $a_{ij,\alpha}^{kl,\beta}(g)=0$ for all $i,j,k,l$ whenever $M_{n_\alpha}(\C)$ and $M_{n_\beta}(\C)$ are not in the same summand $B_i$. This allows us to simplify the defining relations above and we deduce that, for every $i$, there is a natural morphism $C(\widehat{\Gamma}\wr_{*}\G^{aut}(B_i,\psi_i))\longrightarrow \ccwpr$. It follows that, because of the universality of the free product construction, there is a morphism which intertwines the comultiplications $$\ast_{i=1}^k C(\widehat{\Gamma}\wr_{*}\G^{aut}(B_i,\psi_i))\longrightarrow \ccwpr$$
Conversely, by joining the generators and the relations of the different factors of $\ast_{i=1}^k C(\widehat{\Gamma}\wr_{*}\G^{aut}(B_i,\psi_i))$, we exactly get the generators and the relations of $\wpr$ (in the simplified version). Because of the universality of the free wreath product construction we get the inverse morphism which intertwines the comultiplications
$$\ccwpr\longrightarrow \ast_{i=1}^k C(\widehat{\Gamma}\wr_{*}\G^{aut}(B_i,\psi_i))$$
and the quantum group isomorphism is proved.
\end{proof}

The non trivial irreducible representations of $\cwpr$ are then given by an alternating tensor product of non-trivial irreducible representations of the factors $\widehat{\Gamma}\wr_{*}\G^{aut}(B_i,\psi_i)$ (see \cite{wan95}).

We conclude this section with a remark about the spectral measure on a subalgebra of $\wpr$ ($\psi$ $\delta$-trace) which will be useful in the following section.

\begin{remark}\label{carae}
It is well known that the character of the fundamental representation of a quantum permutation group follows the free Poisson law (see e.g. \cite{bc07}). Let $\chi(a(e)):=(\Tr\otimes \id)(a(e))$ be the character of the representation $a(e)$ of $\wpr$ ($\psi$ is a $\delta$-form). The character $\chi(a(e))$ is self-adjoint because of the relation (\ref{adj}). Therefore, in order to find its spectral measure, it is enough to compute the moments $h(\chi(a(e))^k)$. By denoting $p_k$ the orthogonal projection onto the fixed points space $\Hom(1,a(e)^{\otimes k})$ and by using some classic results of Woronowicz (see \cite{wor88}) we have 
$h(\chi(a(e))^k)=h((\Tr\otimes \id)(a(e))^k)=\Tr((\id\otimes h)(a(e)^{\otimes k}))=\Tr(p_k)=\dim(\Hom(1,a(e)^{\otimes k}))=\text{\#}NC(0,k)=C_k$ where $C_k$ are the Catalan numbers. They are exactly the moments of the free Poisson law of parameter 1 so this is the spectral measure of $\chi(a(e))$.
\end{remark}

\section{Properties of $\wpr$}

\subsection{Simplicity and uniqueness of the trace for the reduced algebra}

We prove that, under certain conditions, $C_r(\wpr)$ is simple with unique trace.

\begin{remark}\label{decomp}
The free product decomposition given in Proposition \ref{freeprodcqg} implies that the Haar measure of $\cwpr$ is the free product of the Haar measures of its factors, by using a well known result of Wang (see \cite{wan95}). 
It follows that the decomposition is still true at the level of the reduced C* and von Neumann algebras so the following isomorphisms hold:
$$(C_r(\cwpr),h)\cong \underset{\text{red}}{\ast}_{i=1}^k (C_r(\widehat{\Gamma}\wr_{*}\G^{aut}(B_i,\psi_i)),h_i)$$
$$(L^\infty(\cwpr),h)\cong \ast_{i=1}^k (L^\infty(\widehat{\Gamma}\wr_{*}\G^{aut}(B_i,\psi_i)),h_i)$$
where $h$ and $h_i$ are the Haar states on the respective C*-algebras.
\end{remark}

\begin{prop}\label{semplicita}
Let $(B,\psi)$ be a finite dimensional C*-algebra endowed with a trace $\psi$. Let $\Gamma$ be a discrete group, $|\Gamma |\geq 4$. Consider the free product decomposition of the reduced C*-algebra $C_r(\wpr)$ given in Remark \ref{decomp}. If there is either only one factor (i.e. $\psi$ is a $\delta$-trace) and $\dim(B)\geq 8$ or there are two or more factors with $\dim(B_i)\geq 4$ for all $i$, then $C_r(\wpr)$ is simple with a unique trace given by the free product of the Haar measures.
\end{prop}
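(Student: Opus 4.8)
The plan is to reduce the statement to a collection of known simplicity and uniqueness-of-trace results for free wreath products and free products of operator algebras, exploiting the free product decomposition already established in Remark \ref{decomp}. The overall strategy splits naturally into the two cases stated in the proposition, and in both cases the key analytic input will be the combinatorial description of the fusion rules obtained earlier, which control the growth and non-amenability needed for the relevant operator-algebraic criteria.

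First I would treat the single-factor case, where $\psi$ is a $\delta$-trace and $\dim(B)\geq 8$. Here $C_r(\wpr)$ is not a genuine free product but a single free wreath product. The approach is to verify the hypotheses of the known simplicity criterion for such reduced algebras, which typically requires (i) that the von Neumann algebra $L^\infty(\wpr)$ be a II$_1$ factor with the Haar trace as its unique trace, and (ii) a property ensuring the absence of nontrivial ideals, usually obtained from a free-probabilistic estimate on the character $\chi(a(e))$ together with the fusion rules. The spectral computation in Remark \ref{carae}, showing that $\chi(a(e))$ follows the free Poisson law of parameter $1$, is precisely the input that drives these estimates; combined with the irreducibility and non-equivalence statements for the $\omega(x)$, it gives the rapid decay / freeness-type bounds needed. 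The dimension bound $\dim(B)\geq 8$ is exactly what guarantees that the underlying quantum automorphism group is large enough (equivalently $\delta$ large enough) for the relevant non-amenability and factoriality to hold, mirroring the $n\geq 8$ threshold in the $S_n^+$ literature.

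Next I would treat the multi-factor case, where there are two or more factors $\widehat{\Gamma}\wr_*\G^{aut}(B_i,\psi_i)$ with $\dim(B_i)\geq 4$. Here the decisive tool is the free product structure from Remark \ref{decomp}, namely $(C_r(\wpr),h)\cong \underset{\text{red}}{\ast}_{i} (C_r(\widehat{\Gamma}\wr_*\G^{aut}(B_i,\psi_i)),h_i)$. I would invoke the general results of Dykema and of Avitzour on reduced free products of C*-algebras with respect to faithful traces: a reduced free product of two or more unital C*-algebras, each of dimension at least $2$ and with at least one factor of dimension $\geq 3$ (conditions easily met once $\dim(B_i)\geq 4$), is simple with a unique trace, provided the states are faithful and the algebras satisfy mild nondegeneracy. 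Since each $h_i$ is the faithful Haar state and $\dim(B_i)\geq 4$ forces each factor to be noncommutative and infinite-dimensional, the free product hypotheses are satisfied, and simplicity with unique trace follows directly; the unique trace is the free product of the $h_i$, i.e. the Haar state $h$.

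The main obstacle I expect lies in the single-factor case, specifically in establishing the ideal-triviality (simplicity) rather than the uniqueness of the trace. Uniqueness of the trace tends to follow from factoriality plus the trace-preserving structure, but simplicity of the reduced algebra of a free wreath product requires a genuinely nontrivial argument — typically a Powers-type or Haagerup-inequality-style estimate showing that any nonzero ideal must contain an invertible element. This is where the detailed structure of $NC_{\widehat{\Gamma}}$ and the explicit fusion rules must be leveraged to produce the necessary norm estimates, and it is the technical heart of the proof. One must check carefully that the $\delta$-trace hypothesis (as opposed to a general $\delta$-form) is genuinely used, since it is what makes the Haar state tracial and hence places us in the tracial framework where these criteria apply.
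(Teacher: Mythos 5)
Your high-level architecture matches the paper's: split into the multi-factor case (handled by free product simplicity criteria) and the single-factor case (handled by adapting the $S_n^+$-type argument). But in the multi-factor case there is a concrete gap. You invoke a purported general theorem that a reduced free product of two or more unital C*-algebras with faithful traces is simple with unique trace once the factors have dimension $\geq 2$ and $\geq 3$, and you claim the hypotheses are met because each factor is ``noncommutative and infinite-dimensional.'' No such purely dimension-based criterion applies here; Avitzour's theorem requires the existence of two unitaries in $\mathrm{Ker}(h_A)$ that are orthogonal with respect to the trace inner product, plus one trace-zero unitary in the other factor, and this is \emph{not} automatic for an infinite-dimensional tracial C*-algebra. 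The paper does the actual work: it cites Dykema--Haagerup--R{\o}rdam (a diffuse abelian subalgebra with respect to the trace yields a Haar unitary, hence the required orthogonal unitaries $u$, $u^2$) and then verifies diffuseness in each factor via Remark \ref{carae}, i.e.\ the fact that $\chi(a(e))$ has the free Poisson distribution of parameter $1$, which is diffuse. You mention the free Poisson computation only in your single-factor discussion, which is not where the paper uses it; without it your multi-factor argument does not close.

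In the single-factor case your proposal is a meta-description rather than a proof: you correctly flag that a Powers-type argument is the technical heart and that the threshold $\dim(B)\geq 8$ mirrors the $n\geq 8$ threshold for $S_n^+$, but you do not identify the inputs that actually make the argument go through. The paper's proof here is to transport Lemeux's proof of simplicity for $H_n^+(\Gamma)$ (Theorem 3.5 of \cite{fr13}) to $\wpr$, and the essential external ingredient is Brannan's simplicity (with unique trace) of $C_r(\aut)$ for a $\delta$-trace with $\dim(B)\geq 8$ from \cite{bra13}. Your appeal to ``II$_1$ factoriality plus rapid decay'' is not the mechanism used, and as written this half of the proposal does not establish the statement.
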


\begin{proof}
If there are two or more factors the result follows from a proposition of Avitzour (see \cite[Section 3]{avi82}). It states that, given two C*-algebras $A$ and $B$ endowed with tracial Haar states $h_A$ and $h_B$, the reduced free product C*-algebra $A\underset{\text{red}}{\ast} B$ is simple with unique trace, if there exist two unitary elements of $\text{Ker}(h_A)$ which are orthogonal with respect to the scalar product induced by $h_A$ and a unitary element in $\text{Ker}(h_B)$. In order to show that, in our case, these elements exist we use a result from \cite[Proposition 4.1 (i)]{dhr97} according to which, if a C*-algebra $A$ endowed with a normalized trace $\tau$ admits an abelian sub-C*-algebra $B$ so that the spectral measure corresponding to $\tau_{|_B}$ is diffuse, then there is a unitary element $u\in A$ such that $\tau(u^n)=0$ for each $n\in\Z, n\neq 0$.
We aim to apply such a proposition to every factor of the decomposition in order to satisfy the Avitzour's condition. But this follows from Remark \ref{carae} where we observed that, when considering the generator $a(e)$ of an indecomposable free wreath product $\cwpr$, $\psi$ $\delta$-trace, the spectral measure associated to its character $\chi(a(e))$ is the free Poisson law of parameter 1, which is diffuse. The simplicity and uniqueness of the trace in the multifactor case are then proved.

In the second case, when $\psi$ is a $\delta$-trace and there is not a free product decomposition, the proof is a generalisation of the proof presented in \cite[Theorem 3.5]{fr13} where $H_n^+(\Gamma)$ is replaced by $\wpr$ and relies on the simplicity of $\aut$ for a $\delta$-trace proved in \cite{bra13}.
\end{proof}

\subsection{Haagerup property}

The aim of this subsection is to prove that, under some hypothesis, the von Neumann algebra $L^\infty(H_{(B,\psi)}^{+}(\Gamma))$ has the Haagerup property.

\begin{prop}
Let $(B,\psi)$ be a finite dimensional C*-algebra endowed with a trace $\psi$ and $\Gamma$ be a finite group. Consider the free product decomposition of the von Neumann algebra $L^\infty(\cwpr)$ given in Remark \ref{decomp}. If for each $i$, $\dim(B_i)\geq 4$ then $L^\infty(\cwpr)$ has the Haagerup property.
\end{prop}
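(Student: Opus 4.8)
The plan is to mirror the two-case structure used for simplicity in Proposition \ref{semplicita}: first reduce to an indecomposable factor by free-product stability, and then treat that factor by transporting the known result through a monoidal equivalence.

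First I would invoke Remark \ref{decomp}. Since $\psi$ is a trace, the Haar state $h$ is tracial and
\[ (L^\infty(\cwpr),h)\cong \ast_{i=1}^k \big(L^\infty(\widehat{\Gamma}\wr_{*}\G^{aut}(B_i,\psi_i)),h_i\big) \]
is a trace-preserving free product of finite von Neumann algebras. The Haagerup property for tracial von Neumann algebras is stable under trace-preserving free products (Boca's theorem on free products of completely positive maps), so it is enough to establish it for each factor. By construction every $\psi_i$ is a $\delta_i$-trace and, by hypothesis, $\dim(B_i)\geq 4$; thus the problem is reduced to the indecomposable case.

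So assume $\psi$ is a $\delta$-trace with $\dim(B)\geq 4$. The essential point is that Theorem \ref{teointertgamma} expresses all the morphism spaces of $\cwpr$ through the decorated partitions $NC_{\widehat{\Gamma}}$, and the only place where $(B,\psi)$ enters the tensor-category structure of the maps $T_p$ is the scalar $\delta^{-cy(p,q)}$ in the composition rule. Hence the abstract rigid C*-tensor category underlying $\cwpr$ depends on $(B,\psi)$ only through $\delta$, and on $\Gamma$. For a $\delta$-trace one computes $\delta=\dim(B)$, an integer $\geq 4$, so this category coincides with that of $\widehat{\Gamma}\wr_{*}\G^{aut}(\C^{\dim(B)},\tr)=\widehat{\Gamma}\wr_{*}S_{\dim(B)}^+$; as both quantum groups are of Kac type, they are monoidally equivalent. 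The Haagerup property of $L^\infty(\G)$ for Kac-type $\G$ is a monoidal invariant \cite{dcfy}, so it is enough to know it for the model $\widehat{\Gamma}\wr_{*}S_{N}^+$ with $N\geq 4$ and $\Gamma$ finite, which is Lemeux's result, proved as in \cite{fr13}. Transporting his approximating net across the equivalence gives the Haagerup property for each factor, and together with the first step the proposition follows.

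For the model the approximating maps are the central multipliers acting on the coefficients of $\omega(x)$ by the scalar $t^{\ell(x)}$, with $t\uparrow 1$ and $\ell(x)$ the length of the word $x\in M$. Finiteness of $\Gamma$ is exactly what guarantees the required $L^2$-compactness, since there are then only finitely many irreducibles of each given length. The main obstacle is the complete positivity of these multipliers, i.e. the positive-definiteness of $x\mapsto t^{\ell(x)}$ on the fusion semiring; this is where the free-product shape of the fusion rules is used (through a Schoenberg-type argument showing that $\ell$ is conditionally negative definite), and it is precisely this complete-positivity statement that the monoidal equivalence lets us import from the $S_N^+$ model rather than re-prove by hand. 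A minor point to check is that the monoidal-invariance theorem applies to the present non-uniform $\psi$, which is guaranteed by the $\delta$-trace hypothesis making every factor of Kac type.
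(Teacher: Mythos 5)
Your proposal is correct, and its first step coincides with the paper's: both reduce the multifactor case to the indecomposable one via Remark \ref{decomp} and the stability of the Haagerup property under tracial free products (Boca, \cite{boc93}). Where you diverge is in the single-factor case. The paper simply observes that Lemeux's proof in \cite[Subsection 3.3]{fr13} applies verbatim to $\wpr$, since by Theorem \ref{teointertgamma} the fusion rules and the relevant positivity computations for the central multipliers $\omega(x)\mapsto t^{\ell(x)}$ only use the category $\mathscr{NC}_{\widehat{\Gamma}}$ with parameter $\delta$. You instead package this as a formal transport: the abstract rigid C*-tensor category depends only on $\delta$ and $\Gamma$, for a $\delta$-trace one has $\delta=\dim(B)=N$, hence $\wpr$ is monoidally equivalent to $\widehat{\Gamma}\wr_*S_N^+$, and the (central) Haagerup property is a monoidal invariant which, for Kac-type quantum groups, agrees with the Haagerup property of the von Neumann algebra (\cite{dcfy}). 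This buys you a cleaner logical structure -- you import Lemeux's complete-positivity statement rather than re-verify it -- at the cost of invoking the heavier machinery of \cite{dcfy}; the paper's route is more elementary but leaves the "still applies" check implicit. One phrasing slip worth fixing: the monoidal equivalence follows from the coincidence of the representation categories (via Tannaka--Krein and the linear independence of the $T_p$ for $\dim(B)\geq 4$), not from both quantum groups being of Kac type; the Kac-type hypothesis is what you need afterwards, to identify the central Haagerup property with the von Neumann algebraic one.
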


\begin{proof}
If $\psi$ is in particular a $\delta$-trace, i.e. the decomposition contains only one factor, the proof used in \cite[Subsection 3.3]{fr13} still applies.
The multifactor case is easily reduced to the previous one recalling that the Haagerup property is stable under tracial free products (see \cite[Proposition 3.9]{boc93}) and by using Remark \ref{decomp}.
\end{proof}

\bibliographystyle{amsalpha}
\bibliography{biblio}

\providecommand{\bysame}{\leavevmode\hbox to3em{\hrulefill}\thinspace}
\providecommand{\MR}{\relax\ifhmode\unskip\space\fi MR }
\providecommand{\MRhref}[2]{%
  \href{http://www.ams.org/mathscinet-getitem?mr=#1}{#2}
}
\providecommand{\href}[2]{#2}
\begin{thebibliography}{DCFY14}

\bibitem[Avi82]{avi82}
Daniel Avitzour, \emph{Free products of {$C^{\ast} $}-algebras}, Trans. Amer.
  Math. Soc. \textbf{271} (1982), no.~2, 423--435. \MR{654842 (83h:46070)}

\bibitem[Ban99]{ban99}
Teodor Banica, \emph{Symmetries of a generic coaction}, Math. Ann. \textbf{314}
  (1999), no.~4, 763--780. \MR{1709109 (2001g:46146)}

\bibitem[Ban02]{ban02}
\bysame, \emph{Quantum groups and {F}uss-{C}atalan algebras}, Comm. Math. Phys.
  \textbf{226} (2002), no.~1, 221--232. \MR{1889999 (2002k:46178)}

\bibitem[BC07]{bc07}
Teodor Banica and Beno{\^{\i}}t Collins, \emph{Integration over compact quantum
  groups}, Publ. Res. Inst. Math. Sci. \textbf{43} (2007), no.~2, 277--302.
  \MR{2341011 (2008m:46137)}

\bibitem[Bic04]{bic04}
Julien Bichon, \emph{Free wreath product by the quantum permutation group},
  Algebr. Represent. Theory \textbf{7} (2004), no.~4, 343--362. \MR{2096666
  (2005j:46043)}

\bibitem[Boc93]{boc93}
Florin Boca, \emph{On the method of constructing irreducible finite index
  subfactors of {P}opa}, Pacific J. Math. \textbf{161} (1993), no.~2, 201--231.
  \MR{1242197 (94h:46096)}

\bibitem[Bra13]{bra13}
Michael Brannan, \emph{Reduced operator algebras of trace-perserving quantum
  automorphism groups}, Doc. Math. \textbf{18} (2013), 1349--1402. \MR{3138849}

\bibitem[BS09]{bs09}
Teodor Banica and Roland Speicher, \emph{Liberation of orthogonal {L}ie
  groups}, Adv. Math. \textbf{222} (2009), no.~4, 1461--1501. \MR{2554941
  (2010j:46125)}

\bibitem[BV09]{bv09}
Teodor Banica and Roland Vergnioux, \emph{Fusion rules for quantum reflection
  groups}, J. Noncommut. Geom. \textbf{3} (2009), no.~3, 327--359. \MR{2511633
  (2010i:46109)}

\bibitem[DCFY14]{dcfy}
Kenny De~Commer, Amaury Freslon, and Makoto Yamashita, \emph{C{CAP} for
  universal discrete quantum groups}, Comm. Math. Phys. \textbf{331} (2014),
  no.~2, 677--701, With an appendix by Stefaan Vaes. \MR{3238527}

\bibitem[DHR97]{dhr97}
Ken Dykema, Uffe Haagerup, and Mikael R{\o}rdam, \emph{The stable rank of some
  free product {$C^*$}-algebras}, Duke Math. J. \textbf{90} (1997), no.~1,
  95--121. \MR{1478545 (99g:46077a)}

\bibitem[Lem14]{fr13}
Fran{\c{c}}ois Lemeux, \emph{The fusion rules of some free wreath product
  quantum groups and applications}, J. Funct. Anal. \textbf{267} (2014), no.~7,
  2507--2550. \MR{3250372}

\bibitem[NT13]{nt13}
Sergey Neshveyev and Lars Tuset, \emph{Compact quantum groups and their
  representation categories}, Cours Sp\'ecialis\'es [Specialized Courses],
  vol.~20, Soci\'et\'e Math\'ematique de France, Paris, 2013. \MR{3204665}

\bibitem[Wan95]{wan95}
Shuzhou Wang, \emph{Free products of compact quantum groups}, Comm. Math. Phys.
  \textbf{167} (1995), no.~3, 671--692. \MR{1316765 (95k:46104)}

\bibitem[Wan98]{wan98}
\bysame, \emph{Quantum symmetry groups of finite spaces}, Comm. Math. Phys.
  \textbf{195} (1998), no.~1, 195--211. \MR{1637425 (99h:58014)}

\bibitem[Wor87]{wor87}
S.~L. Woronowicz, \emph{Compact matrix pseudogroups}, Comm. Math. Phys.
  \textbf{111} (1987), no.~4, 613--665. \MR{901157 (88m:46079)}

\bibitem[Wor88]{wor88}
\bysame, \emph{Tannaka-{K}re\u\i n duality for compact matrix pseudogroups.
  {T}wisted {${\rm SU}(N)$} groups}, Invent. Math. \textbf{93} (1988), no.~1,
  35--76. \MR{943923 (90e:22033)}

\bibitem[Wor98]{wor98}
\bysame, \emph{Compact quantum groups}, Sym\'etries quantiques ({L}es
  {H}ouches, 1995), North-Holland, Amsterdam, 1998, pp.~845--884. \MR{1616348
  (99m:46164)}

\end{thebibliography}

\noindent
\textsc{{\footnotesize Lorenzo Pittau: Universit\'e de Cergy-Pontoise, 95000, Cergy-Pontoise, France\\
Univ Paris Diderot, Sorbonne Paris Cit\'e, IMJ-PRG, UMR 7586 CNRS, Sorbonne Universit\'es, UPMC Univ Paris 06, F-75013, Paris, France}}\\
\emph{{\footnotesize E-mail address: }}\texttt{{\footnotesize lorenzo.pittau@u-cergy.fr}}

\end{document}